\definecolor{darkblue}{rgb}{0,0,.5}
\theoremstyle{plain}
\newtheorem{theorem}{Theorem}[section]
\newtheorem{lemma}[theorem]{Lemma}
\newtheorem{proposition}[theorem]{Proposition}
\theoremstyle{definition}
\newtheorem{definition}[theorem]{Definition}
\newtheorem{remark}[theorem]{Remark}
\def\D{\mathcal{D}}
\def\C{\mathcal{C}}
\def\R{\mathbb{R}}
\def\I{\mathbb{I}}
\def\M{\mathcal{M}}
\def\F{\mathcal{F}}
\def\A{\mathcal{A}}
\def\Ham{\mathcal{H}}
\newcommand{\caps}[1]{\textup{\textsc{#1}}}
\providecommand{\bysame}{\makebox[3em]{\hrulefill}\thinspace}
\newcommand{\up}{\upshape}
\newcommand{\leftmapsto}{\mbox{$\;\leftarrow\!\mapstochar\;$}}
\newcommand{\longto}{\longrightarrow}
\newcommand{\toto}{\twoheadrightarrow}
\def\vv<#1>{\langle#1\rangle}
\newcommand{\ev}{\mbox{$\text{\up{ev}}$}}
\providecommand{\det}{\mbox{$\text{\up{det}}\,$}}
\newcommand{\dd}[2]{\mbox{$\frac{\partial #2}{\partial #1}$}}
\providecommand{\del}{\partial}
\newcommand{\om}{\omega}
\newcommand{\Om}{\Omega}
\newcommand{\lam}{\lambda}
\newcommand{\wt}[1]{\mbox{$\widetilde{#1}$}}
\newcommand{\bsc}{\mbox{$\bigsqcup$}}
\newcommand{\by}[2]{\mbox{$\frac{#1}{#2}$}}
\newcommand{\cinf}{\mbox{$C^{\infty}$}}
\providecommand{\set}[1]{\mbox{$\{#1\}$}}
\newcommand{\fl}{\mbox{$\text{\up{Fl}}$}}
\newcommand{\X}{\mathfrak{X}}
\newcommand{\ver}{\mbox{$\textup{Ver}$}}
\newcommand{\hor}{\mbox{$\textup{Hor}$}}
\newcommand{\ann}{\mbox{$\textup{Ann}\,$}}
\newcommand{\spr}[1]{\mbox{$/\negmedspace/_{#1}$}}
\newcommand{\sporb}{\mbox{$/\negmedspace/_{\mathcal{O}}$}}
\newcommand{\momap}{momentum map\xspace}
\newcommand{\ao}{\mathfrak{a}}
\newcommand{\gu}{\mathfrak{g}}
\newcommand{\ho}{\mathfrak{h}}
\newcommand{\ko}{\mathfrak{k}}
\newcommand{\mo}{\mathfrak{m}}
\newcommand{\po}{\mathfrak{p}}
\newcommand{\qo}{\mathfrak{q}}
\newcommand{\Ad}{\mbox{$\text{\upshape{Ad}}$}}
\newcommand{\ad}{\mbox{$\text{\upshape{ad}}$}}
\newcommand{\orb}{\mbox{$\mathcal{O}$}}
\newcommand{\SU}{\mbox{$\textup{SU}$}}
\newcommand{\SO}{\mbox{$\textup{SO}$}}
\newcommand{\hl}{\mbox{$\textup{hl}$}}
\newcommand{\W}{\mbox{$\mathcal{W}$}}
\title[Symmetry reduction of Brownian motion
and
Quantum Calogero-Moser models]{%
Symmetry reduction of Brownian motion\\
and\\
Quantum Calogero-Moser models\\
}
\author{Simon Hochgerner}
\address{Section de Mathematiques,
Station 8,
EPFL, CH-1015 Lausanne}
\email{simon.hochgerner@epfl.ch} 
\begin{document}

\begin{abstract}
Let $Q$ be  a Riemannian $G$-manifold.  
This paper is concerned with the symmetry reduction of Brownian
motion in $Q$ and ramifications thereof in a Hamiltonian context. 
Specializing to the case of polar actions we
discuss
various versions of the stochastic Hamilton-Jacobi equation associated to
the symmetry reduction of Brownian motion  and observe 
some similarities to the 
Schr\"odinger equation of the quantum free particle reduction as
described by Feher and Pusztai~\cite{FP08}.
As an application we use this reduction scheme to derive examples of quantum Calogero-Moser
systems from a stochastic setting. 
\end{abstract}

\maketitle

\tableofcontents

\section*{Introduction}

Let $(Q,\mu)$ be a Riemannian manifold and $G$ a Lie group acting
properly and by isometries on $Q$. 
It is well-known that one can describe geodesic motion in $Q$ in terms
of a $G$-invariant  Hamiltonian system  $(T^*Q,\Om^Q,\Ham)$ where
$\Om^Q$ is the canonical symplectic form on $T^*Q$ and $\Ham$ is the
kinetic energy Hamiltonian. Because of the $G$-invariance this system may
be reduced 
by means of the standard \momap $J_G: T^*Q\to\gu^*$ 
at a given coadjoint orbit level $\orb\subset\gu^*$ to
yield the reduced Hamiltonian system
$(J_G^{-1}(\orb)/G,\Om_{\textup{red}},\Ham_{\textup{red}})$. The reduced
system may be a stratified system in the sense of \cite{SL91,OR04}. 
Solution curves in $J_G^{-1}(\orb)/G$ can be projected to curves in
$Q/G$ where they describe  the evolution of a mechanical system subject to a possibly
spin-dependent potential. Hence the reduced system is -in this case-
more complicated (and more interesting) than the original upstairs
system.
Similar remarks apply to the quantum version of this procedure:
Quantum Hamiltonian reduction of the free particle system will 
induce a new  (and generally non-free) quantum system on the reduced space $Q/G$.   
See \cite{KKS78,FP06,FP08,H04}. 

This paper is concerned with the stochastic version of this scheme. 
The idea is that the stochastic analogue of a free system is Brownian
motion. To further the analogy we consider the Hamiltonian description
(of \cite{LO08})
of Brownian motion in $Q$ and discuss its reduction with respect to
the symmetry group $G$. 
While this construction is very close to the classical one in its
approach we can also use the stochastic Hamilton-Jacobi equation to 
obtain the Schr\"odinger operator of the reduced quantum free particle
system as described by \cite{FP08}.
There are several versions of the stochastic Hamilton-Jacobi
equation. We comment first  on the one of \cite{GM83,N85} and then use
the formulation of \cite{LO09}. In order to obtain a reduced system
that more accurately reproduces the reduced quantum Hamiltonian
operator we also use a ``time-forward''-analogue of the stochastic
action of \cite{LO09}. 

Classical and quantum Calogero-Moser systems can be constructed
as projections of the free system 
when the configuration space is a semi-simple Lie group
or Lie algebra and the symmetry group is the group itself acting by
conjugation or adjoint action. 
See \cite{KKS78,FP06,FP08,H04}. 
We make a certain choice in this regard leading to rational versions
of these systems. 
Thus we can employ the above outlined
procedure to obtain a stochastic version of Calogero-Moser models and
pass, via the Hamilton-Jacobi equation associated to the
``time-forward''  stochastic action, to a stochastic representation of
the quantum Calogero-Moser Schr\"odinger equation as well as its
stationary solution wave functions.

\subsection*{Description of contents}
Section~\ref{sec:diff_in_mfd} starts by providing some general background on
diffusions on manifolds. 
Then we state a result of \cite{HR10} which  allows
for for symmetry reductions of diffusions defined in terms of a
Stratonovich operator which is equivariant in a certain sense with respect to a group
action.

In Section~\ref{sec:aHBM} we consider the general problem of
Hamiltonian construction of Brownian motion in $(Q,\mu)$ as well as
some reduction issues. 
This is mostly independent from Section~\ref{sec:BM-polar} but  
interesting in its own right. 
The construction of Brownian motion in $(Q,\mu)$ via the orthogonal
fame bundle $P$
is well understood, see \cite{IW89}: The idea is that one rolls the
manifold $Q$ along Brownian paths in $\R^n$ ($n=\dim Q$) without
slipping or twisting (\emph{rubber rolling}). The rolling is defined in terms of local
isometries between $Q$ and $\R^n$ thus involving the orthogonal frame
bundle. This gives rise to a Stratonovich equation 
on $P$ and its solution diffusion process $\Gamma^P$ projects to a
diffusion $\Gamma^Q$ in $Q$ which can be
shown to coincide with Brownian motion. Now the Hamiltonian version of
\cite{LO08} of this construction amounts to lifting the Stratonovich
equation on $P$ to a Stratonovich equation on $T^*P$ which is defined
in terms of Hamiltonian vector fields on $T^*P$ associated to
appropriate momentum functions. Thus we obtain a diffusion
$\Gamma^{T^*P}$ which projects to $\Gamma^P = \tau^P\circ\Gamma^{T^*P}$ via the foot point
projection $\tau^P: T^*P\to P$ and thus ultimately to Brownian motion
$\Gamma^Q$ in $Q$.

Now, in line with general experience in Hamiltonian mechanics, 
one would expect that there should also be a way to induce a
diffusion $\Gamma^{T^*Q}$ in $T^*Q = T^*P\spr{0}K$ from $\Gamma^{T^*P}$ such that
$\Gamma^Q = \tau^Q\circ\Gamma^{T^*Q}$ where $\tau^Q: T^*Q\to Q$. 
Here $T^*P\spr{0}K = J_K^{-1}(0)/K$ 
denotes the symplectic reduction of $T^*P$ at the
$0$-level set with respect to the standard \momap $J_K$ of the principal
$K$-action on $P$ cotangent lifted to $T^*Q$.
This is certainly possible if the manifold $Q$ is
parallelizable. However, in general $\Gamma^{T^*P}$ does not preserve
level sets of $J_K$. To overcome this deficency we redo the
Hamiltonian construction of $\Gamma^{T^*P}$ from a
\emph{non-holonomic} point of view. Thus we obtain a different
diffusion $\Gamma^{\mathcal{C}}$ which 
remains on $J_K^{-1}(0)$, projects to a diffusion $\Gamma^{T^*Q}$ on
$T^*Q = J_K^{-1}(0)/K$, and retains the basic feature $\Gamma^P =
\tau^P\circ\Gamma^{\mathcal{C}}$ whence also $\Gamma^Q =
\tau^Q\circ\Gamma^{T^*Q}$. It is maybe not surprising that this works
since the very idea of constructing Brownian motion via rubber rolling
is non-holonomic in its nature. 

We also make some comments on how  these Hamiltonian and non-holonomic (or almost
Hamiltonian) constructions behave in the presence of a symmetry group
$G$ acting properly and by isometries on $Q$.

Section~\ref{sec:BM-polar} is the main part of the paper. 
We assume that the $G$-action on $(Q,\mu)$ is actually
\emph{hyper-polar} which means that there exists an embedded
submanifold $M\subset Q$ which meets all $G$-orbits and does so
orthoganally and that $M$ is locally isometrically diffeomorphic to Euclidean
space $\R^l$.  
Moreover, it is assumed that the $G$-action  is of single orbit type. 
This ensures $Q$  to be paralellizable. 
Then we consider two different types of Hamiltonian constructions of
Brownian motion in $Q$.
Since the configuration space is parallelizable one can give a
Hamiltonian construction of Brownian motion in $Q$ by choosing a
global orthonormal basis and corresponding momentum fuctions on
$T^*Q$. 
The two choices which we consider are firstly that of a constant
frame (assuming it exists) and secondly that of a $G$-invariant frame
adapted to the decomposition into horizontal and vertical space (such
a frame can be constructed under the standing assumptions).
Thus we get two different Hamiltonian diffusions in $T^*Q$ both of
which project to Brownian motion in $Q$. 

Then we consider the symmetry reductions of these diffusions to
$Q/G$, $(T^*Q)/G$ and, where possible, to
$J_G^{-1}(\orb)/G\subset(T^*Q)/G$ where $J_G: T^*Q\to\gu^*$ is the
standard \momap of the cotangent lifted $G$-action on $T^*Q$ and
$\orb\subset\gu^*$ is a coadjoint orbit. 

At the $Q/G$-level we may consider the stochastic Hamilton-Jacobi
equation of Guerra-Morato~\cite{GM83} and Nelson~\cite{N85}. 
If the orbit $\orb$ is such that $J_G^{-1}(\orb)/G$ is diffeomorphic
to  a cotangent bundle (this happens e.g.\ with $\orb=0$), then we can
also invoke the stochastic Hamilton-Jacobi equation of Lazaro-Cami
and Ortega~\cite{LO09} associated to the
projected stochastic action $\wt{S}$ 
with respect to a choice of a (regular)
Lagrange $L_f$ submanifold in $J_G^{-1}(\orb)/G$. 
With 
\[
 \psi(t,x) := \delta^{-\frac{1}{2}}(x)E[\exp(-\widetilde{S}^x_t)],
\]
where $\delta$ is a function on $B$ depending on the inertia tensor
$\I$ associated to the metric $\mu$ 
we thus
find the following diffusion equation: 
\begin{equation}\tag{\ref{e:H1}}
 \frac{\del}{\del t}{}\psi
 = 
 \Big(\by{1}{2}\Delta 
 - \by{1}{2}\delta^{\frac{1}{2}}\Delta\delta^{-\frac{1}{2}} 
 + \by{1}{2}\vv<\lam,\I_x^{-1}(\lam)>\Big) \psi.
\end{equation}
This equation is analoguous to the Schr\"odinger equation of quantum
free partice reduction (with respect to polar actions)
described by
Feher and Pusztai~\cite[Thm.~4.5]{FP08}.

We treat the case $\orb=0$ separately in Section~\ref{sub:zero-orb}
and show that stationary solutions to \eqref{e:H1} are linked to
Lagrange submanifolds $L_f$ associated to eigenfunctions $f$ of the
radial part of the Laplace-Beltrami operator on $(Q,\mu)$. 
To obtain a diffusion equation which more accurately reproduces the
Schr\"odinger operator of the reduced quantum free particle system we
also introduce a ``time-forward'' formulation of the projected stochastic
action used by \cite{LO09} to obtain their version of the stochastic
Hamilton-Jacobi equation.

Finally in Section~\ref{sec:CMS} we apply the results of
Section~\ref{sec:BM-polar} to obtain stationary solutions to
(rational) quantum Calogero-Moser models associated to semi-simple Lie
algebras. This reproduces (and makes use of) some of the formulas
obtained by Olshanetsky and Perelomov~\cite{OP78,OP83}.

\emph{Acknowledgment.} I am grateful to the referees for their very
helpful remarks.

\section{Some stochastic geometry}\label{sec:diff_in_mfd}
This section begins with a review of some necessary definitions and results that
are all contained in the books \cite{IW89,E89}.
Then we state Theorem~\ref{thm:e-r} which
provides one of the approaches to  be used in subsequent
symmetry reduction schemes. 

\subsection{Diffusions on manifolds}
A diffusion is a continuous stochastic process which has the strong
Markov property. This is a concept which can be formulated in any
(decent) topological space.

Let $X$ be a locally compact topological space with one-point
compactification $\dot{X}=X\cup\set{\infty}$
and furnish $\dot{X}$ with its Borel $\sigma$-algebra $\mathcal{B}(X)$.
Define
$W(X)$ to be the set of all maps $w: [0,\infty)\to\dot{X}$ such that
there is a $\zeta(w)\in [0,\infty]$ satisfying
\begin{enumerate}[\up(1)]
\item
$w(t)\in X$ for all $t\in[0,\zeta(w))$ and $w: [0,\zeta(w))\to X$ is
    continuous;
\item
$w(t)=\infty$ for all $t\ge\zeta(w)$.
\end{enumerate}
Now $W(X)$ is equipped with the $\sigma$-algebra $\mathcal{B}(W(X))$ generated by all
Borel cylinder sets in $W(X)$. This $\sigma$-algebra has a natural
filtration given by the family of 
$(\mathcal{B}_t(W(X)))_{t\ge0}$ which are the $\sigma$-algebras generated by
Borel cylinder sets up to time $t$. 

A family of probabilities $(P_x)_{x\in\dot{X}}$ on
$(W(X),\mathcal{B}(W(X)))$ is said to be a
\emph{system of diffusion measures} on
$(W(X),\mathcal{B}(W(X)),\mathcal{B}_t(W(X)))$ if it has the strong
Markov property, for the definition of which we refer to
\cite[Section~IV.5]{IW89}.

Let $(\Om,\F,P)$ be a probability space and $\Gamma:
\Om\times\R_+\to\dot{X}$ a map.  
Define $\check{\Gamma}: \om\mapsto(t\mapsto\Gamma_t(\om))$. 
Then $\Gamma$ is said to be a (continuous) stochastic process in $X$ if
$\check{\Gamma}: (\Om,\F)\to(W(X),\mathcal{B}(W(X)))$ is a random
variable.\footnote{We consider only continuous processes.}
The law of $\Gamma$ is by definition the push-forward probability
$\check{\Gamma}_*P$ on $(W(X),\mathcal{B}(W(X)))$, i.e., $\check{\Gamma}_*P(S) =
P(\check{\Gamma}^{-1}(S))$ for all $S\in\mathcal{B}(W(X))$. 

The process
$\Gamma$ is a \emph{diffusion} in $X$ if there is a system
 of diffusion measures $(P_x)_{x\in\dot{X}}$ such that 
$\check{\Gamma}_*P = P_{\mu}$ as probability laws on
 $(W(X),\mathcal{B}(W(X)))$;
here 
\[
 P_{\mu}(S) = \int_{\dot{X}}P_x(S)\mu(dx)\;
 \textup{ for all }
 S\in\mathcal{B}(W(X))
\]
and $\mu = (\Gamma_0)_*P: \mathcal{B}(X)\to[0,1]$ is the initial
distribution of $\Gamma$. 
In practice $P_x$ will be obtained by push-forward of $P$ with respect
to the stochastic process $\Gamma^x$ which is conditioned such that
$\Gamma^x_0 = x$ a.s.

\subsubsection{Diffusions via Stratonovich equations}
Let now $X=Q$ be a manifold. If $N$ is another manifold then a
\emph{Stratonovich operator} $\mathcal{S}$ from $TN$ to $TQ$ is a
section of $T^*N\otimes TQ\to N\times Q$. 
Equivalently we can view $\mathcal{S}$ as a smooth map 
$\mathcal{S}: Q\times TN\to TQ$ 
which is linear in the fibers and sits over the identity on
$Q$. 
Let $X_0,X_1,\dots,X_k$ be
vector fields on $Q$ and define the associated Stratonovich operator 
$\mathcal{S}$
from $T\R^{k+1}$ to $TQ$ by 
\[
 \mathcal{S}: Q\times T\R^{k+1}\longto TQ,\;
 (x,w,w')\longmapsto\sum_{i=0}^k X_i(x)\vv<e_i,w'>
\]
where $e_i$ denotes the standard basis in $\R^{k+1}$. 
We remark that the number $k$ is not related to the dimension of $Q$. 
Assume $(\F_t)$ is an increasing filtration of $\F$ which is
right-continuous. 
It is then a reference family in the sense of \cite[p.~20]{IW89}.
Consider the process $Y: \Om\times\R_+\to\R^{k+1}$,
$(t,\om)\mapsto(t,W_t(\om))$, where $W$ denotes a continuous version of
$(\F_t)$-adapted Brownian motion in $\R^{k}$. 
We will be concerned with Stratonovich equations of the form 
\begin{equation}\label{e:S1}
 \delta\Gamma = \mathcal{S}(Y,\Gamma)\delta Y.
\end{equation}
A continuous $(\F_t)$-adapted process $\Gamma: \Om\times\R_+\to Q$  is
called a solution to 
\eqref{e:S1} if there is a continuous version $W = (W^i)$ of
$(\F_t)$-adapted Brownian motion in $\R^{k}$ such that, in the
Stratonovich sense,
\begin{equation}\label{e:def-of-sol}
 f(\Gamma_t) - f(\Gamma_0)
 = \int_0^t(X_0f)(\Gamma_s)ds
   + \sum_{i=1}^k\int_0^t(X_if)(\Gamma_s)\delta W_s^i
\end{equation}
for all smooth functions $f$ of compact support on $Q$.

Suppose $\Gamma$ is a solution to \eqref{e:S1} such that $\Gamma_0=x$
a.s.\ and $\Gamma$ satisfies \eqref{e:def-of-sol} with respect to a
version $W$. Then we will write $\Gamma=\Gamma^{x,W}$ to remember
these data.
The following is an account of Theorems~V.1.1 and  V.1.2 in \cite{IW89}

\begin{theorem}
Let the assumptions be as above and consider Equation~\eqref{e:S1}.
\begin{enumerate}[\up (1)]
\item
For each initial condition, $\Gamma_0=x$ a.s.,  
and continuous $(\F_t)$-adapted Brownian motion $W$,
a solution $\Gamma^{x,W}$ exists and is unique up to explosion time. 
\item
Let $P_x := \check{\Gamma}^{x,W}_*P$. Then $P_x$ is independent of $W$
and $(P_x)$ is a system of diffusion measures generated by the second
order differential operator
\begin{equation}\label{e:A-thm}
 A = X_0 + \by{1}{2}\sum_{i=1}^k X_iX_i.
\end{equation}
which acts on the space of smooth functions with compact support 
$\cinf(Q)_0$. 
\end{enumerate}
\end{theorem}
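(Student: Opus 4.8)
The plan is to reduce the problem to a classical Stratonovich \textsc{sde} in Euclidean space and then transport the conclusions back to $Q$. First I would invoke the Whitney embedding theorem to realize $Q$ as a closed submanifold $\iota\colon Q\hookrightarrow\R^N$. Each vector field $X_i$ pushes forward to a vector field on $\iota(Q)$ which, using a tubular neighborhood and a smooth cutoff, I extend to a smooth vector field $\tilde{X}_i$ on $\R^N$ that is tangent to $\iota(Q)$ along $\iota(Q)$. The Stratonovich equation~\eqref{e:S1} then becomes the Euclidean Stratonovich system $\delta Z = \tilde{X}_0(Z)\,dt + \sum_{i=1}^k \tilde{X}_i(Z)\,\delta W^i$, which I would rewrite in It\^o form by adding the usual correction $\by{1}{2}\sum_i \nabla_{\tilde{X}_i}\tilde{X}_i$ to the drift. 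The resulting It\^o coefficients are smooth, hence locally Lipschitz with locally bounded growth, so the standard existence-and-uniqueness theory for such equations (as in \cite[Ch.~IV]{IW89}) yields a pathwise unique strong solution $Z^{x,W}$ up to an explosion time $\zeta$.

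Next I would verify that the solution stays on $\iota(Q)$. Locally $\iota(Q)$ is the common zero set of finitely many submersions $g_1,\dots,g_{N-n}$; since each $\tilde{X}_i$ is tangent to $\iota(Q)$ one has $X_ig_\alpha=0$ and $X_jX_ig_\alpha=0$ there, so applying It\^o's formula to $g_\alpha(Z_t)$ shows that $g_\alpha(Z_t)\equiv0$ for $t<\zeta$. Hence $Z^{x,W}$ takes values in $\iota(Q)$ and $\Gamma^{x,W}:=\iota^{-1}\circ Z^{x,W}$ is a solution of~\eqref{e:S1} on $Q$; a direct computation confirms that it satisfies~\eqref{e:def-of-sol}. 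Uniqueness up to explosion on $Q$ follows from the pathwise uniqueness in $\R^N$, which proves part~(1). The explosion time $\zeta$ is exactly the exit time from $Q$ to the point $\infty$ of the one-point compactification, so $\check{\Gamma}^{x,W}$ takes values in $W(Q)$ as required.

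For part~(2), the independence of $P_x$ from $W$ follows from a Yamada--Watanabe argument: existence together with pathwise uniqueness forces uniqueness in law, so the push-forward $\check{\Gamma}^{x,W}_*P$ depends only on $x$ and not on the particular Brownian version $W$. Because the coefficients $X_0,\dots,X_k$ are time-independent, uniqueness gives the flow (cocycle) property, namely that the solution restarted at time $s$ from $\Gamma^{x,W}_s$ and driven by the shifted Brownian motion agrees with $\Gamma^{x,W}_{s+t}$; combined with the strong Markov property of $W$ and the independence of its increments, this yields that $(P_x)_{x\in\dot{Q}}$ is a system of diffusion measures in the sense of \cite[Sec.~IV.5]{IW89}.

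It remains to identify the generator. For $f\in\cinf(Q)_0$ I would apply~\eqref{e:def-of-sol} and convert the Stratonovich integral to It\^o form, picking up the correction term $\by{1}{2}\sum_i X_i(X_if)$, so that
\begin{equation*}
 f(\Gamma_t)-f(\Gamma_0)
 = \int_0^t (Af)(\Gamma_s)\,ds
   + \sum_{i=1}^k\int_0^t (X_if)(\Gamma_s)\,dW^i_s,
 \qquad
 A = X_0 + \by{1}{2}\sum_{i=1}^k X_iX_i.
\end{equation*}
Since $f$ has compact support the integrands $X_if$ are bounded, so the It\^o integrals are genuine martingales of zero expectation; taking expectations and differentiating at $t=0$ identifies $A$ as the infinitesimal generator of $(P_x)$. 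The main obstacle in this program is not any single estimate but the globalization: one must ensure that the locally defined objects---the extension of the $X_i$, the tangency computation, and the flow identity---patch together consistently across charts and up to the explosion time, and that the strong Markov property survives the passage from the Euclidean solution to the manifold-valued process. Everything else is the standard smooth \textsc{sde} machinery applied within a fixed chart or tubular neighborhood.
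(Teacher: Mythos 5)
Your overall strategy is sound, and in fact it is essentially the canonical one: the paper gives no proof of this theorem at all — it is explicitly presented as ``an account of Theorems~V.1.1 and V.1.2 in \cite{IW89}'' — and the proof in Ikeda--Watanabe proceeds exactly along your lines (Whitney embedding of $Q$ as a closed submanifold of $\R^N$, extension of the vector fields, solution of the extended Euclidean equation, invariance of the submanifold, and Stratonovich-to-It\^o conversion to identify the generator). Your part~(2) is also fine: Yamada--Watanabe gives independence of the law from $W$, the flow property plus the strong Markov property of $W$ gives the diffusion-measure system, and the correction term $\frac12\sum_i X_i X_i f$ on $\cinf(Q)_0$ identifies $A$ via the martingale problem.

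There is, however, one step that fails as written: the invariance argument. You claim that since $\tilde X_i g_\alpha=0$ and $\tilde X_j\tilde X_i g_\alpha=0$ \emph{on} $\iota(Q)$, It\^o's formula applied to $g_\alpha(Z_t)$ shows $g_\alpha(Z_t)\equiv 0$. But It\^o's formula only yields $g_\alpha(Z_t)=\int_0^t(\cdot)(Z_s)\,ds+\sum_i\int_0^t(\tilde X_ig_\alpha)(Z_s)\,dW^i_s$ with integrands that vanish \emph{at points of} $\iota(Q)$; to conclude that they vanish along the path you would already need to know $Z_s\in\iota(Q)$, which is precisely what is being proved — the deduction is circular. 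Two standard repairs: (a)~quantitative — since the It\^o drift and diffusion coefficients of $g_\alpha(Z_t)$ are smooth and vanish on the zero set of the submersion $(g_1,\dots,g_{N-n})$, Hadamard's lemma bounds them locally by $C\,|g(z)|$, and a Gronwall estimate on $E\bigl[\sup_{s\le t\wedge\tau}|g(Z_s)|^2\bigr]$, with $\tau$ the exit time from a relatively compact coordinate neighbourhood, forces $g(Z)\equiv0$ up to $\tau$, after which one iterates over a countable cover by a sequence of stopping times; or (b)~structural — solve the intrinsic equation in a chart of $Q$, observe that the resulting $\iota(Q)$-valued process also solves the ambient equation, and invoke the pathwise uniqueness you already established in $\R^N$ to conclude that the ambient solution coincides with it, hence never leaves $\iota(Q)$. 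Either repair also supplies the ``patching'' you flag at the end as the main obstacle: the gluing is exactly the induction over these stopping times. With that step fixed, your proposal is a correct reconstruction of the cited proof.
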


In general, if $\Gamma$ is a diffusion in $Q$ such that the associated
system of diffusion measures is unique and is generated by a second
order differential operator
$A$, then $A$ is also called the generator of $\Gamma$ and $\Gamma$ is
said to be an $A$-diffusion.
This does not require $Q$ to be a manifold; if $Q$ is a topological
space then a generator is a linear operator $A$ on the Banach space of
continuous functions $C(\dot{Q})$ with domain of definition $\D(A)$.
See \cite[Section~IV.5]{IW89}.

Assume that $Q$ is endowed with a linear connection $\nabla: TQ\times
TQ\to TQ$. For vector fields $X,Y\in\X(Q)$ the Hessian of
$f\in\cinf(Q)$ is
$\textup{Hess}^{\nabla}(f)(X,Y) = XY(f) - \nabla_XY(f)$. This is
bilinear in $X$ and $Y$ but not symmetric, unless $\nabla$ is
torsion-free. 

\begin{definition}[Drift, Martingale, Brownian motion]\label{def:drift}
$\phantom{t}$
\begin{itemize}
\item
Let $\Gamma$ be a diffusion in $Q$ with generator $A$. 
Then the \emph{drift} of
$\Gamma$ with respect to $\nabla$ is defined to be the first order
part of $A$, which is determined by $\nabla$. If $A$ is of the form
\eqref{e:A-thm} then this is $X_0+\by{1}{2}\sum\nabla_{X_i}X_i$. 
\item
According to \cite[Chapter~IV]{E89} the $A$-diffusion $\Gamma$ is a
\emph{martingale} in $(Q,\nabla)$ if 
$A$
is purely second order with
respect to $\nabla$, i.e., the $\nabla$-drift vanishes. 
In \cite{E89} this is stated for torsion-free connections but it is
noted that one can use the same definition for connections with
torsion. 
\item
If $(Q,\mu)$ is a Riemannian manifold then an $A$-diffusion
is called \emph{Brownian motion} if $A=\by{1}{2}\Delta$ where $\Delta$ is the
metric Laplacian. 
\end{itemize}
\end{definition}


As it stands, Brownian motion is not unique. There may be several
diffusions in $Q$ such that the associated system of diffusion
measures is generated by $A=\by{1}{2}\Delta$. However, since we have
defined diffusions in terms of systems of diffusion measures we
regard two diffusions which give rise to the same system of diffusion
measures as equivalent. Now the system of diffusion measures generated
by $A=\by{1}{2}\Delta$ is unique and it is in this sense that we think
of Brownian motion as being unique.

To construct Brownian motion in $(Q,\mu)$ consider the orthonormal
frame bundle $\rho: \F\to Q$ over $(Q,\mu)$. 
The Levi-Civita connection on $Q$ gives rise to a unique principal
bundle connection $\om$ on $\rho: \F\to Q$.
An element $u\in\F$ can
be regarded as an isometry $u: \R^d\to T_{\rho(u)}Q$
where $d=\dim Q$. Let $(e_i)$ be the standard basis in $\R^d$. Define
the canonical horizontal vector fields $L_i\in\X(\F,\hor^{\om})$,
$i=1,\dots,d$,
by
\begin{equation}\label{e:L}
 L_i(u) = \textup{hl}^{\om}_u(u(e_i))
\end{equation}
where $\textup{hl}^{\om}: \X(Q)\to\X(\F)$ is the horizontal lift map
of $\om$.
If $(W^i)$ is Brownian motion in $\R^d$ and $\Gamma$ solves the
Stratonovich equation
\begin{equation}\label{e:SL}
 \delta\Gamma = \sum L_i(\Gamma)\delta W^i
\end{equation}
then $\rho\circ\Gamma$ is a diffusion in $Q$ with generator
$\by{1}{2}\Delta^{\mu}$, that is, a Brownian motion. This is explained
in \cite[Chapter~V.4]{IW89} and follows also from
Theorem~\ref{thm:e-r} below; the essential observation in
this context is that the Stratonovich operator $\mathcal{S}$ of \eqref{e:SL} enjoys
the equivariance relation 
\[
 \mathcal{S}(ug,g^{-1}w,g^{-1}w') 
 = \big(\textup{hl}^{\om}(u)(u(ge_i))\vv<e_i,g^{-1}e_i>\big)g 
 = \mathcal{S}(u,w,w')g
\] 
for the principal right action of the
structure group.
To connect with Theorem~\ref{thm:e-r} the principal right action
can be turned to a left action via inversion in the group.

\subsection{Equivariant reduction}\label{sec:equiv-red}

Let $(\Om,\F,(\F_t),P)$, $Q$, $X_0,X_1,\dots,X_k\in\X(Q)$ and
$\delta\Gamma = \mathcal{S}(Y,\Gamma)\delta Y$ as before. Suppose
there is a Lie group $G$ which acts properly on $Q$ from the left. 
We extend this action to $\dot{Q}$ by requiring $\infty$ to be
a fixed point. Let $\pi: Q\toto Q/G$ be the projection
and $\cinf(Q)^G$ denote the subspace of $G$-invariant smooth functions
on $Q$.
Note that $Q/G$ need not be a manifold; in general $Q/G$ is a
topological space which is naturally stratified by smooth components.

In the following all actions
are tangent lifted where appropriate without further notice.

The proof of the following theorem is sketched in the appendix. 

\begin{theorem}[Eqivariant reduction~\cite{HR10}]\label{thm:e-r}
Suppose there is a group representation $\rho: G\to\textup{O}(k)$ 
and let $\textup{O}(k)$ act on $\R^{k+1}=\R\times\R^k$ such that the first
factor is acted upon trivially.
If
$\mathcal{S}$ satisfies the equivariance property
\begin{equation}
 \mathcal{S}(gx,\rho(g)y,\rho(g)y')
 = g.\mathcal{S}(x,y,y')
\end{equation}
for all $(x,y,y')\in Q\times T\R^{k+1}$ then the diffusion $\Gamma$
induces a diffusion
$\pi\circ\Gamma$ in $Q/G$. Moreover, $A$ preserves $\cinf(Q)^G$, and
the induced generator $A_0$ of $\pi\circ\Gamma$ is
characterized by 
\begin{equation}\label{e:A_0}
 \pi^*A_0f = A\pi^*f
\end{equation}
for $f\in\cinf(Q/G) := \set{f\in C(Q/G): \pi^*f\in\cinf(Q)^G}$.
\end{theorem}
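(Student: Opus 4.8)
The plan is to reduce everything to the defining property \eqref{e:def-of-sol} of a solution and to exploit the equivariance of $\mathcal{S}$ to push the diffusion down to $Q/G$. First I would observe that the equivariance hypothesis on $\mathcal{S}$ translates directly into an equivariance statement for the vector fields $X_0,\dots,X_k$. Because the first $\R$-factor of $\R^{k+1}$ is acted on trivially and $\R^k$ is acted on through $\rho(g)\in\textup{O}(k)$, reading off the coefficient of the time direction shows that $X_0$ is $G$-invariant, i.e.\ $g_*X_0=X_0$, while reading off the $\R^k$-coefficients shows that the tuple $(X_1,\dots,X_k)$ is permuted-and-rotated by $\rho(g)$: one has $g_*X_i=\sum_j\rho(g)_{ji}X_j$ (up to the obvious index convention). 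From this I would compute how $A=X_0+\frac12\sum_i X_iX_i$ transforms. The first-order part $X_0$ is invariant. For the second-order part, the orthogonality $\rho(g)\in\textup{O}(k)$ is exactly what makes $\sum_i X_iX_i$ invariant: under $g_*$ the quadratic expression $\sum_i X_iX_i$ is sent to $\sum_{i,p,q}\rho(g)_{pi}\rho(g)_{qi}X_pX_q=\sum_{p,q}(\rho(g)^T\rho(g))_{pq}X_pX_q=\sum_p X_pX_p$, using $\rho(g)^T\rho(g)=\I$. Hence $A$ commutes with the $G$-action, and in particular $A$ maps $\cinf(Q)^G$ into itself; this is the claim that $A$ preserves $\cinf(Q)^G$.

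Next I would use this invariance to define $A_0$ by the relation \eqref{e:A_0}. Given $f\in\cinf(Q/G)$, its pullback $\pi^*f$ lies in $\cinf(Q)^G$ by definition of the function space, so $A\pi^*f$ is again a $G$-invariant smooth function on $Q$ and therefore descends to a continuous function on $Q/G$; declaring this descended function to be $A_0f$ gives a well-defined linear operator on $\cinf(Q/G)$, and \eqref{e:A_0} holds by construction. The substantive point is that $A_0$ so defined actually generates the pushed-forward process $\pi\circ\Gamma$. For this I would invoke the previous theorem, which identifies the generator of the solution $\Gamma$ of \eqref{e:S1} as $A$ via the integral characterization \eqref{e:def-of-sol}. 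Applying $\pi$ and testing against $f\in\cinf(Q/G)$, I would write $f(\pi\Gamma_t)=(\pi^*f)(\Gamma_t)$ and expand the latter using \eqref{e:def-of-sol} for the invariant function $\pi^*f$; the martingale (stochastic integral) terms involve $(X_i\pi^*f)(\Gamma_s)$ and the bounded-variation term involves $(X_0\pi^*f)(\Gamma_s)$, which reassemble, via \eqref{e:A-thm} and \eqref{e:A_0}, into $\int_0^t (A_0 f)(\pi\Gamma_s)\,ds$ plus a local martingale. This is precisely the martingale-problem characterization showing that $\pi\circ\Gamma$ is an $A_0$-diffusion.

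The remaining issue, and the one I expect to be the main obstacle, is purely measure-theoretic rather than algebraic: one must check that $\pi\circ\Gamma$ is genuinely a diffusion \emph{in the sense fixed earlier}, i.e.\ that $\check{(\pi\circ\Gamma)}$ is a bona fide random variable into $(W(Q/G),\mathcal{B}(W(Q/G)))$ and that its law arises from a \emph{system of diffusion measures} with the strong Markov property, even though $Q/G$ is only a stratified topological space and need not be a manifold. The continuity of $\pi$ makes $\pi\circ\Gamma$ a continuous process, and properness of the $G$-action guarantees $Q/G$ is locally compact Hausdorff, so the target path space $W(Q/G)$ is well-defined; measurability of $\check{(\pi\circ\Gamma)}$ then follows from that of $\check{\Gamma}$ together with continuity of the induced map on path spaces. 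The strong Markov property of the family $(\pi_*P_x)$ should be inherited from that of $(P_x)$, since $G$-invariant events pull back to $G$-invariant events and the strong Markov property is tested against the filtration; here the invariance of $A$ established above is what ensures the pushed-forward measures depend only on the image point $\pi(x)\in Q/G$ and not on the chosen representative $x$. I would treat the explosion time $\zeta$ carefully, since the one-point compactifications of $Q$ and $Q/G$ are compatible under $\pi$ (both send the added point $\infty$ to $\infty$), so escape to infinity downstairs corresponds to escape upstairs. These verifications are exactly the content of the cited result \cite{HR10}, whose proof is sketched in the appendix, so I would defer the technical strong-Markov bookkeeping there and keep the body of the argument focused on the equivariance computation and the generator identification \eqref{e:A_0}.
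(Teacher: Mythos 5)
Your algebraic core is correct and matches the paper's: from the equivariance of $\mathcal{S}$ you correctly read off that $X_0$ is $G$-invariant and that $g_*X_i=\sum_j\vv<e_j,\rho(g)e_i>X_j$, and the orthogonality of $\rho(g)$ then makes $\sum_i X_iX_i$ (hence $A$) commute with the $G$-action; the descent of $A$ to $A_0$ and the martingale-problem identification of $A_0$ as the generator of $\pi\circ\Gamma$ are also the same as in the paper's appendix (the paper runs the martingale argument on path space, pulling $M^f_t$ back through $p=\pi_*$, but this is the same idea as your expansion of $(\pi^*f)(\Gamma_t)$).

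The genuine gap is in the step you relegate to "measure-theoretic bookkeeping": you claim that \emph{the invariance of $A$} is what ensures the pushed-forward measures $\pi_*P_x$ depend only on $[x]$ and not on the representative $x$. Invariance of $A$ alone does not give this. What it gives is that $g_*P_x$ is \emph{some} system of measures generated by $A$ started at $gx$; to conclude $g_*P_x=P_{gx}$ (which is the identity that makes $P_{[x]}:=\pi_*P_x$ well defined and lets the strong Markov property descend) you need a uniqueness statement, and your plan never names one. The paper fills exactly this hole as its very first step, at the level of paths rather than laws: by pathwise existence and uniqueness of solutions to \eqref{e:S1}, $g\Gamma^{x,W}=\Gamma^{gx,\rho(g)W}$, since $g\Gamma^{x,W}$ solves the equation driven by $\rho(g)W$; then, because $\rho(g)\in\textup{O}(k)$ makes $\rho(g)W$ again a Brownian motion and the cited theorem says the law $P_x$ is independent of the choice of driving Brownian motion, one gets $P_{gx}=g_*P_x$ by evaluating on Borel cylinder sets. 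Note that orthogonality of $\rho$ is thus used twice in the paper — once to keep $\rho(g)W$ a Brownian motion, once in the invariance of the second-order term — whereas your plan only uses it in the second place; the first use is precisely what your argument is missing. Your route could be repaired by invoking well-posedness (uniqueness) of the martingale problem for $A$ instead, but that must be stated explicitly; as written, the inference from generator invariance to representative-independence of the projected law does not follow.
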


Equivariant reduction is a natural extension of the reduction theory
of \cite[Theorem~3.1]{LO08a}. While the results of \cite{LO08a} are
stronger in the sense that they provide a Stratonovich equation on the
base space $Q/G$, they are only applicable when the original Stratonovich
operator is $G$-invariant. This means that  
\begin{equation}
 \mathcal{S}(gx,y,y')
 = g.\mathcal{S}(x,y,y')
\end{equation}
for all $g\in G$ and $x\in Q$ and \cite{LO08a} show how to obtain an 
induced Stratonovich operator on the base space $Q/G$.
This covers the case where the
Stratonovich operator is defined in terms of $G$-invariant vector fields.
To connect with the above theorem, note that a $G$-invariant operator
$\mathcal{S}$ can be considered $G$-equivariant with respect to the
trivial action on the source space $\R^{k+1}$, i.e., where
$\rho(g)y=y$ for all $g\in G$ and $y\in\R^{k+1}$. 
By contrast the observation in
equivariant reduction is that although the upstairs Stratonovich
operator $\mathcal{S}$ is not projectable to $Q/G$, the diffusion still factors to a diffusion
in the base space and the generator of the downstairs diffusion is induced from that of the
original diffusion on $Q$.

\section{The (almost) Hamiltonian construction of Brownian motion and
  symmetry reduction}\label{sec:aHBM}

\subsection{General version}
Let $(Q,\mu)$ be a Riemannian manifold, 
$G$ a Lie group acting properly and through isometries on $Q$. 
We do not require the action to be free.
Assume $\Ham: T^*Q\to\R$ is a $G$-invariant Hamiltonian
function (e.g., the kinetic energy associated to $\mu$).
We want to describe a perturbation  by a Brownian motion
of the Hamiltonian 
system $(T^*Q,\Om^Q,\Ham)$ with dynamics 
$X_{\mathcal{H}}$;
here  $X_{\mathcal{H}}$ is the Hamiltonian vector field with respect to
the canonical exact symplectic form $\Om^Q$ on $T^*Q$.
If $Q = \R^n$ was a vector space and $\mu$ the Euclidean metric then 
this should give rise to an Ito equation
of the form 
\begin{equation}\label{e:HD-0}
 d\Gamma = X_{\mathcal{H}}(\Gamma)dt + dW
\end{equation} 
with $W$ 
Brownian motion  in $Q$. 
This is Bismut's notion of a Hamiltonian diffusion described in
\cite{Bis81}.  
Lazaro-Cami and Ortega~\cite{LO08} have generalized
this set-up to non-Euclidean spaces and incorporated general
semi-martingales as source of the noise.

To generalize \eqref{e:HD-0} to the non-Euclidean setting one notices
that it defines a diffusion with generator 
$X_{\mathcal{H}} + \by{1}{2}\Delta^{\mu}$ where $\Delta^{\mu} =
\sum\del_i\del_i$ is the
Laplace-Beltrami operator on
$(Q,\mu)$ viewed as acting on $\cinf(T^*Q)$, 
thus identifying $\del_i = \dd{q^i}{}$ with $(\del_i,0) =
\sum\dd{p_i}{H^i}\dd{q^i}{}$ which is the Hamiltonian vector field of
$H^i: T^*Q\to\R$, $(q,p)\mapsto\vv<p,e_i>$ with $\dd{q^i}{}=e_i$ the
standard basis. 
Below we will give a general construction of such a
diffusion. 
In doing so we follow essentially the ideas of \cite{LO08}.

Thus we
consider the orthonormal frame bundle $\rho: P\to Q$ and
denote its structure group by $K=O(n)$, $n=\dim(Q)$. 
The Levi-Civita connection
$\nabla$ on $(Q,\mu)$ induces a unique principal bundle connection
form $\om: TP\to\ko$. 
We use $\om$ to decompose $TQ=\hor^{\om}\oplus\ver^{\rho}$ into
horizontal space $\hor^{\om}=\ker\om$ and vertical space
$\ver^{\rho}=\ker T\rho$.
The soldering form $\sigma\in\Om^1(P,\R^n)$ is
defined by $\sigma(\xi_u) = u^{-1}(T_u\rho.\xi)$ for $\xi_u\in T_uP$,
and this induces a trivialization of the horizontal bundle.
Therefore, we obtain a trivialization of $TP$ by
\begin{equation}\label{e:triv-P}
 \Phi: TP\longto P\times \R^n\times\ko, 
 \quad
 \xi_u\longmapsto\Big(u,\sigma(\xi_u),\om(\xi_u)\Big)
\end{equation}
We may use this isomorphism to obtain a metric $\mu^P$ on $P$ by using
the standard inner products on $\R^n$ and $\ko$ and requiring $\Phi$ to
be an isometry. 
If $e_1,\dots,e_n$ is the standard basis in $\R^n$ then we may use
$\Phi$ to define vector fields $L_i\in\X(P,\hor^{\om})$ by
\[
 L_i(u) = \Phi^{-1}(u,e_i,0) = \hl^{\om}_u(u(e_i))
\]
where $\hl^{\om}: P\times_Q TQ\to \hor^{\om}$ is the horizontal lift mapping.
The $L_i$ are called the canonical horizontal vector fields.
Notice that they form a global orthonormal frame for $\hor^{\om}$.

We remark that $\mu^P$ is $K$-invariant and the $K$-invariant
subspaces $\hor^{\om}$ and $\ver^{\rho}$ are perpendicular with
respect to $\mu^P$. 
By construction $\om$ coincides with the
mechanical connection (defined in \eqref{e:MC}) on $(P,\mu^P)$. 

Furthermore, since $G$ acts by isometries on $Q$ it induces an action
on $P$ which also preserves $\mu^P$: an element $u\in P$ is viewed as
an isometry $u: \R^n\to T_{\rho(u)}Q$ and $g\in G$ acts from the left
via $g.u = T_{\rho(u)}g\circ u: \R^n\to T_{g\rho(u)}Q$. 

Now the principal action of $K$ on $P$ is a right action and to
convert it to a left action we use inversion in the group. 
The actions of $K$ and $G$
commute whence the product $K\times G$ also acts on $P$. 
The cotangent lifted action by $K\times G$ is Hamiltonian. 
Singular Reduction in Stages thus implies that
\[
 T^*P\spr{0\times\mathcal{O}}(K\times G)
 = (T^*P\spr{0}K)\sporb G
 = T^*Q \sporb G
\]
as stratified symplectic spaces;
here $\sporb$ denotes symplectic reduction at the coadjoint orbit
$\orb\subset\gu^*$ with respect to the obvious momentum map. 
See \cite{MPROM07}.
Let $\mathcal{H}^P := \rho^*\Ham$ 
be the $K\times G$-invariant 
upstairs Hamiltonian where $\rho: T^*P\to T^*Q$ is defined in
terms of the metric isomorphisms $T^*P\cong_{\mu^P}TP$ and
$T^*Q\cong_{\mu^Q}TQ$. 
Then
(regular)
Hamiltonian reduction of $(T^*P,\Om^P,\Ham)$ at $0\in\ko^*$ with
respect to $K$ yields the original Hamiltonian system
$(T^*Q,\Om^Q,\Ham)$. 
Consequently (singular) Hamiltonian reduction at
$0\times\orb\subset\ko^*\times\gu^*$ of $(T^*P,\Om^P,\Ham^P)$ is
equivalent to (singular) 
Hamiltonian reduction of $(T^*Q,\Om^Q,\Ham)$ at
$\orb\subset\gu^*$.  

Let us define the momentum functions 
\[
 H^i: T^*P\longto\R,
 \quad
 \eta_u\longmapsto\vv<\eta_u,L_i(u)>
\]
and the associated Hamiltonian vector fields $X_{H^i}$. Note that
$T\tau_P.X_{H^i}(\eta_u) = L_i(u)$ where $\tau_P: T^*P\to P$ is the
tangent projection. Therefore, if $\Gamma^{T^*P}$
satisfies $\delta\Gamma^{T^*P} = \sum X_{H^i}(\Gamma^{T^*P})\delta
W^i$ the projection $\tau_P\circ\Gamma^{T^*P}$ satisfies \eqref{e:SL},
and $\tau_P\circ\Gamma^{T^*P}$ is Brownian motion in $(Q,\mu)$. This
is the Hamiltonian construction of Brownian motion of \cite{LO08}.

Consider the Stratonovich operator
\begin{align*}
 \mathcal{S}^{T^*P}: T^*P\times T(\R\times\R^n)&\longto T(T^*P),\quad
 (\eta_u,t,t',w,w')
 \longmapsto
 X_{\rho^*\mathcal{H}}(\eta_u)t'
 + \sum X_{H^i}(\eta_u)\vv<e_i,w'>.
\end{align*}
Let $K$ act on $\R\times\R^n$ by acting on the second factor only and
$G$ act trivially on $R\times\R^n$. Thus $K\times G$ acts on
$\R\times\R^n$. 

Let $J_K: T^*P\to\ko^*$ denote the standard \momap of the cotangent
lifted $K$-action on $T^*P$. Then 
\[
 J_K^{-1}(0) = \set{\lambda\in T^*P: 
 \vv<\lambda,X>=0\textup{ for all } X\in\ver^{\rho}}
 = \hor^*
\]
 where $\hor^*$ is
called the dual horizontal bundle. Contrary to $\hor^{\om}$ the dual
horizontal $\hor^*$ is defined connection independently. 
Further, let $J_G: T^*P\to\gu^*$ denote the \momap of the lifted
$G$-action on $T^*P$, and correspondingly $J_{K\times G} = (J_K,J_G):
T^*P\to\ko^*\times\gu^*$ for the product action of $K\times G$. 

\begin{lemma}\label{lm:equiv-P}
The following hold.
\begin{enumerate}[\up (1)]
\item
The operator $\mathcal{S}^{T^*P}$ is $K\times G$-equivariant. 
\item
If $\orb\subset\gu^*$ is a codjoint orbit, then
$\mathcal{S}^{T^*P}$ restricts to a Stratonovich operator on
$J_G^{-1}(\mathcal{O})$ in the
sense that
\[
 \mathcal{S}^{T^*P}:
 J_{G}^{-1}(\orb)\times T(\R\times\R^n)
 \longto T\Big(J_{G}^{-1}(\orb)\Big).
\]
If the $G$-action is non-free then the last statement holds for each
smooth stratum in $J_{G}^{-1}(\orb)$.
\end{enumerate}
\end{lemma}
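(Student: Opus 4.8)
The plan is to treat the two summands of $\mathcal{S}^{T^*P}$ separately and to reduce both parts of the lemma to invariance properties of $\rho^*\Ham$ and of the momentum functions $H^i$, using throughout that cotangent lifts of the $K\times G$-action are symplectomorphisms of $(T^*P,\Om^P)$. First I would record how the canonical horizontal frame $(L_i)$ transforms. Since $G$ acts by isometries it preserves the Levi-Civita connection, hence $\om$ and the horizontal distribution $\hor^{\om}$; combined with $(g.u)(e_i)=Tg\cdot u(e_i)$ this gives $L_i(g.u)=Tg\cdot L_i(u)$, so the $L_i$ are $G$-invariant, which is exactly why $G$ acts trivially on $\R^n$. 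For the left $K$-action $a.u=u\cdot a^{-1}$, the equivariance of the horizontal lift under the principal right action (together with orthogonality of $a$) yields $Ta\cdot L_k(u)=\sum_i\rho(a)_{ik}L_i(a.u)$, i.e. the frame transforms by the defining representation $\rho\colon K\to\textup{O}(n)$, precisely the representation acting on the second factor of $\R\times\R^n$. This is the cotangent-bundle analogue of the equivariance relation for the operator $\mathcal{S}$ of \eqref{e:SL} displayed above.

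For part (1), since the cotangent lift of any $g\in K\times G$ preserves $\Om^P$, one has $g_*X_F=X_{F\circ g^{-1}}$ for every function $F$. Applied to $F=\rho^*\Ham$, which is $K\times G$-invariant, this gives $g_*X_{\rho^*\Ham}=X_{\rho^*\Ham}$; as both factors act trivially on the first coordinate of $\R\times\R^n$ the scalar $t'$ is unchanged, so the first summand of $\mathcal{S}^{T^*P}$ is equivariant. Applied to $F=H^i$ and combined with the frame transformation law, a short computation gives $g_*X_{H^i}=\sum_j\rho(g)_{ji}X_{H^j}$; substituting this turns $g.\sum_iX_{H^i}(\eta_u)\vv<e_i,w'>$ into $\sum_jX_{H^j}(g.\eta_u)\vv<e_j,\rho(g)w'>$, which is precisely the second summand of $\mathcal{S}^{T^*P}(g.\eta_u,\rho(g)y,\rho(g)y')$. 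This proves (1).

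For part (2) the decisive observation is that both $\rho^*\Ham$ and each $H^i$ are $G$-invariant: invariance of $\rho^*\Ham$ is assumed, while $H^i(g.\eta_u)=\vv<\eta_u,Tg^{-1}\cdot L_i(g.u)>=\vv<\eta_u,L_i(u)>=H^i(\eta_u)$ follows from the $G$-invariance of the $L_i$. By Noether's theorem --- equivalently $\{F,J_G^\xi\}=-\mathcal{L}_{\xi_{T^*P}}F=0$ for $G$-invariant $F$ and all $\xi\in\gu$ --- the momentum $J_G$ is conserved along the flows of $X_{\rho^*\Ham}$ and of every $X_{H^i}$. Hence these fields are tangent to each fibre $J_G^{-1}(c)$, and a fortiori to $J_G^{-1}(\orb)=\bigcup_{c\in\orb}J_G^{-1}(c)$. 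As the image of $\mathcal{S}^{T^*P}(\eta_u,\cdot)$ lies in the span of $X_{\rho^*\Ham}(\eta_u)$ and the $X_{H^i}(\eta_u)$, the operator restricts to a map $J_G^{-1}(\orb)\times T(\R\times\R^n)\to T(J_G^{-1}(\orb))$.

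The step I expect to require the most care is the non-free case, where $J_G^{-1}(\orb)$ is only a stratified space. Here I would argue that the flow of each $G$-invariant Hamiltonian $\rho^*\Ham$ and $H^i$ is $G$-equivariant, and that a $G$-equivariant diffeomorphism preserves isotropy groups, hence orbit type; therefore each flow preserves the orbit-type decomposition. Combined with the conservation of $J_G$ established above, each flow then preserves every smooth stratum of $J_G^{-1}(\orb)$, so the generating vector fields --- and with them the image of $\mathcal{S}^{T^*P}$ --- are tangent to each stratum. This is the delicate point, because it rests on the compatibility of the $G$-invariant Hamiltonian dynamics with the singular (orbit-type) stratification rather than on any regular-value or transversality argument.
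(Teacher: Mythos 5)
Your proof is correct and follows essentially the same route as the paper's: part (1) rests on the transformation law of the momentum functions $H^i$ under $K\times G$ together with the fact that the cotangent-lifted action is symplectic, and part (2) is the same Noether-plus-stratification argument. The only differences are presentational: you pass from functions to Hamiltonian vector fields via the identity $g_*X_F=X_{F\circ g^{-1}}$ where the paper performs an explicit flow computation with the contracted functions $H_{w'}=\sum H^i\vv<e_i,w'>$, and you spell out the equivariant-flow/orbit-type argument for the stratified case that the paper simply cites to \cite{SL91,OR04}.
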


\begin{proof}
(1)
Clearly $X_{\rho^*\mathcal{H}}$ is $K\times G$-invariant. Furthermore,
we have, for $g\in G$, 
\[
 H^i(g.\eta_u)
 = \vv<g.\eta_u, L_i(g.u)> 
 = \vv<g.\eta_u,g.L_i(u)>
 = \vv<\eta_u,L_i(u)>
 = H^i(\eta_u)
\]
whence $H^i$ and thus also $X_{H^i}$ are $G$-invariant. This
establishes $G$-equivariance of $\mathcal{S}^{T^*P}$. 

To see $K$-equivariance we start by defining 
\[
 H_{w'}: T^*P\longto\R,
 \quad
 \eta_u\longmapsto\sum H^i(\eta_u)\vv<e_i,w'>
 = \sum\vv<\eta_u,\hl^{\om}_u\Big(u(w')\Big)>
\]
for $w'\in\R^n$. We note that 
\[
 H_{k.w'}(k.\eta_u)  
 = \sum\vv<k.\eta_u,\hl_{k.u}^{\om}\Big(u(\vv<e_i,k.w'>k^{-1}e_i)\Big)>
 = \sum\vv<k.\eta_u,k.\hl^{\om}_{u}\Big(u(w')\Big)>
 = H_{w'}(\eta_u)  
\]
Therefore, if we fix a vector $Y_{\eta_u}\in T_{\eta_u}(T^*P)$ and
choose a (local) $K$-invariant vector field  $Y\in\X(T^*P)$ such that
$Y(\eta_u) = Y_{\eta_u}$, we find 
\begin{align*}
 d\Big(\sum H^i\vv<e_i,k.w'>\Big)(k.\eta_u).T_{\eta_u}k.Y_{\eta_u}
 &= \dd{t}{}|_0\Big((\fl_t^Y)^*H_{k.w'}\Big)(k.\eta_u)
 = \dd{t}{}|_0 H_{k.w'}(\fl_t^Y(k.\eta_u))\\
 &= \dd{t}{}|_0 H_{k.w'}(k.\fl_t^Y(\eta_u))
 = \dd{t}{}|_0 H_{w'}(\fl_t^Y(\eta_u))\\
 &= d\Big(\sum H^i\vv<e_i,w'>\Big)(\eta_u).Y_{\eta_u}
\end{align*}
Putting this together yields
\begin{align*}
 \sum X_{H^i}(k.\eta_u)\vv<e_i,k.w'>
 &= 
 (\Om^P)^{-1}_{k.\eta_u}
 \Big(d\Big(\sum H^i\vv<e_i,k.w'>\Big)(k.\eta_u)\Big)\\
 &= 
 T_{\eta_u}k.(\Om^P)^{-1}_{\eta_u}
 \Big(d\Big(\sum H^i\vv<e_i,k.w'>\Big)(k.\eta_u)\circ T_{\eta_u}k\Big)\\
 &= 
 T_{\eta_u}k.(\Om^P)^{-1}_{\eta_u}
 \Big(d\Big(\sum H^i\vv<e_i,w'>\Big)(\eta_u) \Big)\\
 &=
 T_{\eta_u}k.\sum X_{H^i}(\eta_u)\vv<e_i,w'>
\end{align*}
whence $K$-equivariance of $\mathcal{S}^{T^*P}$ also follows.

(2).
Since the $H^i$ are $G$-invariant 
Noether's Theorem implies that the
Hamiltonian vector fields used in the
definition of $\mathcal{S}^{T^*P}$ are tangent to $J_G^{-1}(\orb)$. 
This is also true in the stratified context and moreover  strata are preserved
by Hamiltonian vector fields of invariant functions. See \cite{SL91,OR04}. 
\end{proof}

Now to reduce the Hamiltonian diffusion to $T^*Q\sporb G$ we should
check that $\mathcal{S}^{T^*P}$ restricts also to a Stratonovich
operator on
$J_K^{-1}(0) = \hor^*\subset T^*P$. For then, by equivariant
reduction, we would obtain a diffusion in $(J_K^{-1}(0)\cap
J_G^{-1}(\orb))/(K\times G) = J_{K\times G}^{-1}(0\times\orb)/(K\times G) =
T^*Q\sporb G$. Thus for $\eta_u\in\hor^*$ we should show that
$\mathcal{S}^{T^*P}(\eta_u,w,w')\in T_{\eta_u}(J_K^{-1}(0))=\ker
dJ_K(\eta_u)$; that is, for $Y\in\ko$ and with $\zeta^{K,T^*P}_Y$
denoting the fundamental vector field associated to the cotangent
lifted $K$-action on $T^*P$,
\begin{align*} 
 dJ_K.\sum X_{H^i}(\eta_u)\vv<e_i,w'>
 &= -\sum dH^i(\eta_u).\zeta_Y^{K,T^*P}(\eta_u)\vv<e_i,w'>
 = -\dd{t}{}|_0H_{w'}(\exp(tY).\eta_u)\vv<e_i,w'>\\
 &= -\dd{t}{}|_0H_{w'}(\eta_u)\vv<e_i,\exp(-tY).w'>
 = H_{w'}(\eta_u)\vv<e_i,Y.w'> 
 = \vv<\eta_u,\hl^{\om}_u(w')>.
\end{align*}
However, this expression is certainly not $0$ for general $\eta_u$, $Y$ and $w'$.

Therefore, while $\rho\circ\tau\circ\Gamma^{T^*P}$ yields Brownian
motion in $(Q,\mu)$ (see above), we 
\emph{do not obtain an induced diffusion} in $T^*Q = \hor^*/K$. 
This problem can be resolved 
reflecting on the nature of the
construction of Brownian motion in the Riemannian manifold $Q$:
A Brownian path in $Q$ is constructing by rolling $Q$ along a Brownian
path in $\R^n$ \emph{without slipping or twisting}, i.e., rubber
rolling on a rough surface. 
Thus we are dealing with a \emph{non-holonomic} control problem. 
The configuration space of this problem is 
\[
 \wt{Q} 
 := \set{(x,w,u): x\in Q, w\in\R^n, u: T_xQ\to\R^n\textup{ is an
     isometry}}
 \cong P\times\R^n
\]
and the constraint distribution which specifies the set of allowed
motions is 
\[
 \wt{\D}
 :=
 \set{(X_u,w,w')\in T(P\times\R^n): w'=\sigma(X_u)\And
   X_u\in\hor_u^{\om}}
 \cong\hor^{\om}\times\R^n
\]
where $\sigma\in\Om^{1}(P)$ is the soldering form as above. 
The abelian group $\R^n$ acts on the pair $(\wt{Q},\wt{D})$ by
so-called outer symmetries which reflects the fact that the
constraints are the same at all points of the ``surface'' $\R^n$ along
which the rolling takes place. Thus we can reduce by the abelian
$\R^n$ action and obtain a new configuration space - distribution pair
$(P,\D:=\hor^{\om})$. Suppose $\xi_i(t): \Om\to\R$ are a set of
time-dependent controls. (This corresponds to the Brownian noise.)
Then the non-holonomic control problem
\begin{equation}\label{e:control-problem}
 u'(t) = \sum L_i(u(t))\xi_i(t)
\end{equation}
has an almost Hamiltonian formulation which is given by the ODE on
$\M := \mu(\D) = \mu(\hor^{\om}) = \hor^* = J_K^{-1}(0)$ 
\[
 \gamma'(t)
 = \sum \mathcal{P}(\gamma(t))X_{H^i}(\gamma(t))\xi_i(t)
\]
(see \cite{C02,B03}) 
where $\mathcal{P}: T(T^*P)|\M\to\C\subset T\M$ 
is the Hamiltonian encoding of the \emph{constraint force
projection operator} \eqref{e:PTMC}.\footnote{The notation $\M$ and $\C$ is
  introduced in order to be consistent with the non-holonomic
  literature such as \cite{BS93}.}
With $\tau: T^*P\to P$ being the footpoint projection 
the space $\C$ is defined as 
\[
 \C 
 := 
 \set{Z\in T\M: T\tau.Z \in \D_{\tau(Z)} = \D\cap T_{\tau(Z)}P }.
\]
Now it can be shown that $T(T^*P)|\M = \C\oplus\C^{\Om}$ where
$\C^{\Om}$ is the $\Om$-orthogonal of $\C$ in $T(T^*P)|\M$ with
respect to the canonical symplectic form $\Om$ on $T^*P$.
Then 
\begin{equation}\label{e:PTMC}
 \mathcal{P}: T(T^*P)|\M\longto\C\subset T\M
\end{equation}  
is defined as the projection
along $\C^{\Om}$. 
See \cite{BS93}. 
Let $\Pi: TP\to\hor^{\om}$ be the projection along the vertical
space. It follows that $T\tau\circ \mathcal{P} = \Pi\circ\tau$. (See
\cite[Section~2]{HR10}). Since $\Pi.L_i = L_i$ 
and $X_{H^i}$ is $\tau$-related to $L_i$ 
one sees that   
\begin{equation}\label{e:proj-version}
 u' 
 = T\tau.\gamma'
 = T\tau.\mathcal{P}.\sum X_{H^i}(\gamma)\xi_i
 = \sum L_i(u)\xi_i
\end{equation}
whence at this projected level one cannot distinguish the Hamiltonian
from the non-holonomic version of the control problem     
\eqref{e:control-problem}. 
However, accepting the non-holonomic nature of the construction of
Brownian motion one can give an almost Hamiltonian description in
terms of the 
diffusion $\Gamma^{\mathcal{C}}$ in $\M$ defined by the 
Stratonovich equation 
\[
 \delta\Gamma_t^{\mathcal{C}} =
 \mathcal{S}^{\mathcal{C}}(\Gamma_t,W_t,\delta W_t), 
 \qquad
 \Gamma_0 = \eta_u\in\M \textup{ a.s.} 
\]
where the $\C$-valued Stratonovich operator is
given by 
\begin{equation}\label{e:S-C}
 \mathcal{S}^{\mathcal{C}}:
 \M\times T\R^n\longto \C\subset T\M,
 \quad
 (\eta_u,w,w')
 \longmapsto
 \sum \mathcal{P}(\eta_u)X_{H^i}(\eta_u)\vv<e_i,w'>.
\end{equation}
Of course, by \eqref{e:proj-version}, we retain the basic feature
that $\rho\circ\tau\circ\Gamma^{\mathcal{C}}$ is Brownian motion in
$Q$. But now we also obtain an induced diffusion in $T^*Q$.
Let $\pi^{T^*Q}: T^*Q\toto(T^*Q)/G$ 
and $\pi^{\mathcal{M}}: \M\toto \M/K=T^*Q$ be the 
orbit projections. 

\begin{proposition}
The diffusion $\Gamma^{\mathcal{C}}$ induces a diffusion
$\Gamma^{T^*Q} = \pi^{\mathcal{M}}\circ\Gamma^{\mathcal{C}}$ in 
$\M/K = J_K^{-1}(0)/K = T^*Q$
as well as a diffusion $\Gamma^{\textup{red}} =
\pi^{T^*Q}\circ\Gamma^{T^*Q}$ 
in $\M/(K\times G) =
(T^*Q)/G$. Moreover, $\tau\circ\Gamma^{T^*Q}$ is Brownian motion in
$(Q,\mu)$. 
\end{proposition}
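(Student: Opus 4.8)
The plan is to derive every assertion from the equivariant reduction theorem, Theorem~\ref{thm:e-r}, applied to the $\C$-valued Stratonovich operator $\mathcal{S}^{\mathcal{C}}$ of \eqref{e:S-C} on $\M=J_K^{-1}(0)=\hor^*$. The one hypothesis to be checked is that $\mathcal{S}^{\mathcal{C}}$ is equivariant for the product group $K\times G$ with respect to the representation $K\times G\to\textup{O}(n)$, $(k,g)\mapsto k$, acting on the noise space $\R^n$ (so that $K$ acts by its defining representation and $G$ trivially). Since $\mathcal{S}^{\mathcal{C}}$ carries no Hamiltonian drift term, it fits the template of Theorem~\ref{thm:e-r} with vanishing drift field $X_0=0$ and with $\textup{O}(n)$ acting trivially on the time factor of $\R\times\R^n$. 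Once equivariance is in hand, the two reduced diffusions are obtained by applying the theorem twice --- first to $K$ alone and then to the full group $K\times G$ --- the two being compatible because the quotient map $\M\toto\M/(K\times G)$ factors as $\pi^{T^*Q}\circ\pi^{\mathcal{M}}$.

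The only genuinely new input is the equivariance of the constraint projection $\mathcal{P}$ of \eqref{e:PTMC}; the equivariance of the constituent fields $X_{H^i}$ has already been recorded in Lemma~\ref{lm:equiv-P}. Recall that $\mathcal{P}$ is the projection of $T(T^*P)|\M$ onto $\C$ along its symplectic orthogonal $\C^{\Om}$, so I would first show that both $\C$ and $\C^{\Om}$ are $K\times G$-invariant. The distribution $\D=\hor^{\om}$ is preserved by $K$ (as $\om$ is a principal connection, $\ker\om$ is permuted by the structure group) and by $G$ (which acts by isometries, hence preserves the Levi-Civita connection and the induced $\om$), while the footpoint projection is equivariant; this forces $\C$ to be invariant. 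The canonical symplectic form $\Om^P$ is preserved by both cotangent-lifted actions, so $\C^{\Om}$ is invariant as well. Hence $\mathcal{P}$ intertwines the tangent-lifted actions. Composing this intertwining relation with the equivariance of $\eta_u\mapsto\sum X_{H^i}(\eta_u)\vv<e_i,\cdot>$ from Lemma~\ref{lm:equiv-P} yields $\mathcal{S}^{\mathcal{C}}\big((k,g).\eta_u,k.w,k.w'\big)=(k,g).\mathcal{S}^{\mathcal{C}}(\eta_u,w,w')$, precisely the hypothesis of Theorem~\ref{thm:e-r}.

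With equivariance established, applying Theorem~\ref{thm:e-r} to the free principal $K$-action on $\M$ produces the diffusion $\Gamma^{T^*Q}=\pi^{\mathcal{M}}\circ\Gamma^{\mathcal{C}}$ in the manifold $\M/K=T^*Q$, with generator characterized by \eqref{e:A_0}. Applying the theorem to the proper (but in general non-free) product action of $K\times G$ produces the diffusion $\Gamma^{\textup{red}}=\pi^{T^*Q}\circ\pi^{\mathcal{M}}\circ\Gamma^{\mathcal{C}}=\pi^{T^*Q}\circ\Gamma^{T^*Q}$ in the stratified quotient $\M/(K\times G)=(T^*Q)/G$; as in Lemma~\ref{lm:equiv-P}(2), the non-free case is treated stratum by stratum. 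The final Brownian-motion claim then requires no further stochastic analysis: the footpoint projections assemble into a commutative square $\tau^Q\circ\pi^{\mathcal{M}}=\rho\circ\tau$ on $\M$, so that $\tau^Q\circ\Gamma^{T^*Q}=\rho\circ\tau\circ\Gamma^{\mathcal{C}}$, and the right-hand side was already identified as Brownian motion in $(Q,\mu)$ by \eqref{e:proj-version}.

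I expect the main obstacle to be the invariance of the symplectic orthogonal $\C^{\Om}$, that is, verifying that $\mathcal{P}$ really is $K\times G$-equivariant, since this is the step where the interaction between the non-holonomic constraint geometry and the symmetry group must be controlled; it rests on the simultaneous invariance of $\D=\hor^{\om}$ and of $\Om^P$. A secondary care-point is the stratified application of Theorem~\ref{thm:e-r} when $G$ acts non-freely, which I would dispatch exactly as in the proof of Lemma~\ref{lm:equiv-P}.
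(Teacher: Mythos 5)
Your proposal is correct and takes essentially the same route as the paper's own proof: both hinge on writing $\mathcal{S}^{\mathcal{C}}=\mathcal{P}.\mathcal{S}^{T^*P}$, combining the $K\times G$-equivariance from Lemma~\ref{lm:equiv-P} with the equivariance of the projection $\mathcal{P}$, and then applying Theorem~\ref{thm:e-r} to the $K$- and $K\times G$-actions on $\M$, with the Brownian-motion claim coming from \eqref{e:proj-version}. The only difference is one of detail: you justify what the paper calls ``easy to see'' (equivariance of $\mathcal{P}$ via the invariance of $\C$ and of its symplectic orthogonal $\C^{\Om}$), which is a faithful expansion rather than a different argument.
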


\begin{proof}
Since $\mathcal{S}^{\mathcal{C}} = \mathcal{P}.\mathcal{S}^{T^*P}$ and 
the projection $\mathcal{P}$ is $K$-equivariant we preserve the
$K$-equivariance of $\mathcal{S}^{T^*P}$. It is also easy to see that
$\mathcal{P}$ is $G$-equivariant whence $G$-invariance of
$\mathcal{S}^{T^*P}$ is preserved as well. 
Hence
Theorem~\ref{thm:e-r} applies to the $K$-action as well as to the
$K\times G$-action on $\M$. 
\end{proof}

However, it is not clear that the induced diffusion
$\Gamma^{\textup{red}}$ will preserve the symplectic leaves
$T^*Q\sporb G\subset (T^*Q)/G$.\footnote{It is slightly imprecise to
  speak of symplectic leaves in this context: The $T^*Q\sporb G$ need
  not be connected, they need not even be smooth manifolds. It is
  rather the connected components of the smooth strata of $T^*Q\sporb
  G$ which should be called symplectic leaves.} 
The problem is that, in non-holonomic
mechanics, symmetries need not lead to conservation laws, and
correspondingly it cannot be asserted that
$\mathcal{S}^{\mathcal{C}}(\eta_u,w,w')\in T_{\eta_u}(J_{K\times
  G}^{-1}(0\times\orb))$ for $\eta_u\in J_{K\times G}(0\times\orb)$. 
Instead of a conservation law one obtains a momentum equation, see \cite{CMR01}.

\subsection{The flat case}
If the manifold $(Q,\mu)$ is flat so that it admits a global orthonormal
frame $L_a\in\X(Q)$, $a=1,\dots,n$ then one can give a Hamiltonian
construction of Brownian motion in $Q$ which is formulated on $T^*Q$
(\cite{LO08}): 
Define Hamiltonian  momentum functions on $T^*Q$ by
\[
 H^0(q,p) = -\by{1}{2}\vv<p,\sum \nabla^{\mu}_{L_a}L_a(q)>
 \textup{ and }
 H^a(q,p) = \vv<p,L_a(q)>.
\]
Consider the Stratonovich equation $\delta\Gamma^{T^*Q} =
\mathcal{S}^{T^*Q}(\Gamma^{T^*Q},W,\delta W)$ where $W$ is Brownian
motion in $\R^n$ and the Stratonovich operator is defined as
\begin{equation}\label{e:BM-Ham-flat}
 \mathcal{S}^{T^*Q}:
 T^*Q\times T(\R\times\R^n)  \longto T(T^*Q),\quad
 (q,p;t,t',w,w')
 \longmapsto
 X_{H^0}(q,p)t' + \sum X_{H^a}(q,p)\vv<e_a,w'>.
\end{equation}
Then $\Gamma^Q := \tau\circ\Gamma^{T^*Q}$ is a diffusion in $Q$ with generator
$\by{1}{2}\sum(L_a L_a - \nabla^{\mu}_{L_a}L_a) =
\by{1}{2}\Delta^{\mu}$, 
i.e., Brownian motion.  
If a Lie group $G$ acts properly and by isometries on $(Q,\mu)$ and the
frame $(L_a)$ is $G$-invariant then 
$\Gamma^{T^*Q}$ drops to a diffusion in the (singular) Poisson
quotient $(T^*Q)/G$ which preserves the 
symplectic leaves $T^*Q\sporb G = J_G^{-1}(\orb)/G$.

\section{Reduction of Brownian motion with respect to
  polar actions}\label{sec:BM-polar}

We give two different Hamiltonian constructions of Brownian
motion on a Riemannian $G$-manifold $(Q,\mu)$, and discuss their respective
symmetry reductions.
Throughout $T^*Q$ will be endowed with the canonical exact symplectic
form denoted by $\Om^Q$ or $\Om$ if no confusion is possible.

\subsection{Generalities on transformation groups}\label{sec:3A}
Consider a proper Riemannian $G$-manifold $(Q,\mu)$. This means that
$G$ is a Lie group acting properly and by isometries on the Riemannian
manifold $Q$. 
We will assume that the $G$-action is of \emph{single orbit type}
which means that there is a subgroup $H\subset G$ such that, for any
$q\in Q$, the stabilizer subgroup $G_q$ is conjugate to $H$ within
$G$. This has the consequence that $B:=Q/G$ is a smooth manifold and
the projection map $\pi: Q\toto B$ is a fiber bundle, albeit not a
principle one. Its typical fiber is of the form $G/H$. See \cite{PT88}.
 
The vertical bundle $\ver\subset TQ$ on $\pi: Q\toto B$ is defined as 
\[
 \ver := \ker T\pi.
\]

\begin{definition}
The (generalized) \emph{mechanical connection} on $\pi: Q\toto B$ is
the fiber bundle connection which is specified by requiring the
horizontal bundle $\hor$ to be orthogonal to the vertical one with respect to
the $G$-invariant Riemannian metric $\mu$.\footnote{We have included
  the prefix `generalized' because usually the mechanical connection
  is defined on a principal bundle. In order not to overload the
  nomenclature we will subsequently omit this prefix.}  
\end{definition}

See \eqref{e:char-of-A} below for an alternative characterization of
the mechanical connection.

From now on we equip $\pi: Q\toto B$ with the mechanical connection.
Let $\A: TQ\to\gu$ be the associated connection
form. This is not a principal bundle connection form. 
For $q\in Q$ with isotropy group $G_q$ we have 
\[
 A_q\zeta_X(q) = X
 \textup{ for all }
 X\in\gu_q^{\bot}.
\]
Here $\gu_q\subset\gu$ is the infinitesimal stabilizer at $q$, $\zeta:
\gu\to\X(Q)$ is the fundamental vector field mapping, $\gu$ is
equipped with an $\Ad$-invariant inner product $\vv<.,.>$ and
$\gu_q^{\bot}$ is the $\vv<.,.>$-orthogonal to $\gu_q$.
At a point $q\in Q$ we have $\zeta_X(q) =
\dd{t}{}|_0\exp(tX).q \in T_qQ$. 
We may view $A$ also as a $G$-equivariant bundle map
\begin{equation}\label{e:MC}
 A: TQ\longto \bsc_{q\in Q}\gu_q^{\bot}
\end{equation}
which restricts to an isomorphism on the vertical bundle.  
See \cite{Mon} for a definition in the context of a free action and
\cite{H04} for the generalization to single orbit type actions. 
Let $\xi_i = v_i+\zeta_{X_i} \in TQ=\hor\oplus\ver$ be decomposed into
horizontal and vertical parts. Then 
\[
 \mu(\xi_1,\xi_2)
 =
 \mu(v_1,v_2) + \I_q(X_1,X_2)
\]
where the $G$-invariant operator $\I: 
 \bsc_{q\in Q}\gu_q^{\bot}\times\bsc_{q\in Q}\gu_q^{\bot} \to\R$ is
defined by
\[
 \I_q(X_1,X_2) 
 := \mu_q(\zeta_{X_1}(q),\zeta_{X_2}(q)) 
 = \vv<A_q^*A_q X_1,X_2>,
\]
and is called the \caps{inertia tensor}.
For each $q\in Q$ we obtain a $G_q$-equivariant isomorphism $\I_q:
\gu_q^{\bot}\to\ann\gu_q$ where $\ann\gu_q$ is the annihilator of
$\gu_q$ in $\gu^*$. 

\begin{definition}[The canonical momentum map on $T^*Q$]
The map $J: T^*Q\to\gu^*$ defined by
\[
 \vv<J(q,p),X> = \vv<p,\zeta_X(q)>
\]
is the \emph{cotangent bundle momentum map}. 
\end{definition}

The mechanical connection $A$ can now be characterized by the
pointwise diagram
\begin{equation}\label{e:char-of-A}
\xymatrix{
 {T_qQ}
  \ar @{->}[r]^-{A_q}
  \ar @{->}[d]^{\mu_q}_{\cong}
 &
 {\gu_q^{\bot}}
  \ar @{->}[d]_-{\I_q}^{\cong}\\
 {T^*_qQ}\ar @{->}[r]^-{J_q}
 &
 {\ann\gu_q}
}
\end{equation} 
where $J_q$ is the restriction of $J$ to $T_q^*Q$, whence its
importance for mechanical systems.

Let us now assume additionally that $G$ is compact and acts by polar
transformations on $Q$. 
This means that 
there is a submanifold $M\subset Q$ which meets all group orbits in an
orthogonal manner. 
The submanifold $M$ is called a (global cross-) section of the action. 
See \cite{PT88}. 
A canonical example is the conjugation action of a compact Lie group
on itself. In this case a section is given by a maximal torus. 
As with
this example, it is not required that the action be free.
Note that there is a residual action by 
$W:=\set{g\in G: g.M\subset M}/\set{g\in G: g.x=x\;\forall x\in M}$ on
$M$ and that
\[
 Q/G = M/W =: B. 
\]
In the aforementioned example $W$ coincides with the
Weyl group of the compact Lie group and $B$ is the interior of a Weyl chamber.

We cocnctinue to  assume that the $G$-action is of single orbit type. In the
canonical example where the group acts upon itself by conjugation this
amounts to passing to the open dense subset of regular points. 
 
There is thus a local diffeomorphism $Q\cong M\times G/H$. This
diffeomorphism is generally not global: Consider $\SO(3)$ acting on
$Q=\R^3\setminus0$ then $M=\R\setminus0$ and $G/H=S^2$. Factoring out
the residual $W$-action one does obtain a global diffeomorphism
\[
 Q\cong M\times_W G/H.
\]
By reason of dimension it follows that $W$ is discrete and hence
finite since $G$ is compact. Since $Q/G
= M/W = B$ it is also true that $M \cong \bsc_{w\in W}B$. Thus there
is also a (non-canonical) diffeomorphism 
\begin{equation}\label{e:non-can}
 Q\cong B\times G/H.
\end{equation}
Moreover, we will assume that the action is actually hyper-polar: $M$ is
supposed to be locally isometrically diffeomorphic to a Euclidean
space $\R^l$ such that $B\cong\R^l$.

\subsection{Deterministic Hamiltonian reduction}
Let $\Ham: T^*Q\to\R$ be the $G$-invariant kinetic energy Hamiltonian
associated to $\mu$. By \cite{H04}
Hamiltonian reduction of
$(T^*Q,\Om^Q,\Ham)$ at the orbit level  $\orb$  yields the reduced stratified
Hamiltonian system  
$(T^*Q\sporb G,\Om^{\mathcal{O}},\Ham^{\mathcal{O}})$;
using the mechanical connection $A$ the phase space of 
this system can be realized as 
\begin{equation}
 T^*Q\sporb G\cong T^*B\times_B(\bsc_{q\in Q}\orb\cap\ann\gu_q)/G,
 \quad [(q,p)]\longmapsto (\pi(q),\eta^*_q(p);[(q,A_q^*(\lam))])
\end{equation}
where $\eta_q: T_xB \to T_qQ$ is the horizontal lift mapping associated
to $A$.
The reduced Hamiltonian then becomes
\begin{equation}\label{e:H^0}
 \Ham^{\mathcal{O}}:  
 (x=\pi(q),u;[(q,\lam)])
 \longmapsto
 \by{1}{2}\vv<u,u> + \by{1}{2}\vv<\lam,\I_q^{-1}(\lam)>.
\end{equation}  
The reduced space $T^*B\times_B(\bsc_{q\in Q}\orb\cap\ann\gu_q)/G$ is
a stratified symplectic fiber bundle over $T^*B$ with standard fiber
$(\orb\cap\ann\ho)/H = \orb\spr{0}H$ 
which is a stratified symplectic space. 
This is
the (singular) bundle picture in mechanics. 
The potential term $\by{1}{2}\vv<\lam,\I_q^{-1}(\lam)>$ can also be
viewed as a $W$-invariant function on $M\times\orb\spr{0}H$.

\subsection{Brownian motion in a constant frame and reduction}
The situation we have in mind in this subsection is that of a mechanical
system defined on a Lie group $G$ such that the (kinetic energy) Hamiltonian is
invariant under the conjugation action and the tangent space is
trivialized as $TG = G\times\gu$ via (left or right)
multiplication. 
Prototypical examples for this set-up are the Calogero-Moser systems
discussed 
in Section~\ref{sec:CMS}. 
This set-up is quite general in the sense that the hierarchy of
Calogero-Moser systems is very rich;
any (real or complex)
semi-simple or also reductive Lie group can be taken as a
configuration space, and different choices will lead to different
(versions of Calogero-Moser) dynamical systems. The classical
Calogero-Moser system corresponds to the choice $G=\SU(n)$ of
\cite{KKS78}. Generalizations where $G$ is a loop group could also be
feasible and will give rise to a Calogero-Moser system with an
elliptic interaction potential, see e.g.\ \cite{GN94}.

Suppose $Q$ carries a global orthonormal frame $L_A$, $A=1,\dots,n$
such that
$\nabla^{\mu}_{L_A}L_A = 0$ and
 $TQ\cong Q\times\qo$ via this frame with $\qo = \R^n$. 
A Hamiltonian construction $\Gamma^{T^*Q}$ of Brownian motion
$\Gamma^Q=\tau\circ\Gamma^{T^*Q}$  can then be given in
terms of the Stratonovich equation associated to operator
$\mathcal{S}^{T^*Q}$ defined in 
\eqref{e:BM-Ham-flat}. 
Since $H_{w'}(q,p) = \sum H^{A}(q,p)\vv<e_A,w'> = \vv<p,w'> =
H_{gw'}(gq,gp)$ for $g\in G$ it follows as in the proof of 
Lemma~\ref{lm:equiv-P} that $\mathcal{S}^{T^*Q}$ is $G$-equivariant. 
Therefore, by Theorem~\ref{thm:e-r} the diffusion $\Gamma^{T^*Q}$
factors through the projection $\pi^{T^*Q}: T^*Q\toto (T^*Q)/G$ to 
$\pi^{T^*Q}\circ\Gamma^{T^*Q}$ in the quotient. 
Consider the diagram 
\begin{equation}\label{e:diag}
\xymatrix{
 {T^*Q}
  \ar @{->>}[r]^-{\pi^{T^*Q}}
  \ar @{->}[d]_{\tau}
 &
 {(T^*Q)/G}
  \ar @{->}[d]^-{\tau/G}\\
 {Q}\ar @{->>}[r]^-{\pi}
 &
 {Q/G}
}
\end{equation}
We can alternatively write Brownian motion on $(Q,\mu)$ as the
solution to the Stratonovich diffusion equation $\delta\Gamma^Q = \sum
L_A\delta W^A$. Again this operator is $G$-equivariant and, repeating
the same reasoning, we obtain an induced diffusion
\[
 \Gamma^B = \pi\circ\Gamma^Q =
 (\tau/G)\circ\pi^{T^*Q}\circ\Gamma^{T^*Q}
\]
in $B=Q/G$. 
In order to obtain the generator $\by{1}{2}\Delta^0$ of $\Gamma^B$
we
note that the generator of $\Gamma^Q$ is $\by{1}{2}\Delta^Q$ and
introduce the following function:  
Let $e_{\alpha}$ be a an orthonormal basis on $\gu$ and define $\delta:
Q\to\R_{>0}$ by
\begin{equation}\label{e:delta}
 \delta(q)
 = 
 \Big|\det(\I_q(e_{\alpha},e_{\beta}))_{\alpha\beta}\Big|^{\frac{1}{2}}.
\end{equation}
This function is $G$-invariant and we will call the induced function
on $B$ also $\delta$. 
The projection $\pi: Q\toto B$ is a Riemannian submersion with
respect to the induced metric $\mu_0$ on $B$. Let $\Delta^Q$, $\Delta^B$ denote
the Laplace-Beltrami operator on $(Q,\mu)$, $(B,\mu_0)$ respectively.
Note that $\Delta^Q$ acts on the set of $G$-invariant functions 
$\cinf(Q)^G$.

\begin{proposition}
Let $\by{1}{2}\Delta^0$ be the induced operator on $\cinf(B)$ characterized by
$\pi^*\circ\Delta^0 = \Delta^Q\circ\pi^*$. Then
\begin{equation}\label{e:Delta_0}
 \Delta^0 = \Delta^B + \nabla\log\delta
\end{equation}
where $\nabla$ is the $\mu_0$-gradient. 
\end{proposition}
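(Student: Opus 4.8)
The plan is to reduce the identity to a local computation on the principal stratum and to write $\Delta^Q$ in divergence form relative to a volume element that factors along the Riemannian submersion $\pi: Q\toto B$. Since the $G$-action is of single orbit type, the fibres of $\pi$ are exactly the $G$-orbits, so the basic functions $\pi^*\cinf(B)$ coincide with $\cinf(Q)^G$; thus $\Delta^0$ is well defined by $\pi^*\Delta^0=\Delta^Q\pi^*$ and it suffices to evaluate $\Delta^Q$ on a function $\pi^*f$. Because $\pi$ is a Riemannian submersion, the metric $\mu$ is block diagonal along $TQ=\hor\oplus\ver$, the horizontal block being the pullback of $\mu_0$ under the fibrewise isometry $\eta_q$. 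In adapted local coordinates $(x,y)$, with $x$ pulled back from $B$ and $y$ parametrising the fibre $G/H$, the Riemannian volume element of $Q$ factors as $dV_Q=\delta\cdot\pi^*dV_B\wedge dV^{\mathrm{ref}}_{G/H}$, where $dV^{\mathrm{ref}}_{G/H}$ is a fixed reference volume on the model fibre.

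The geometric heart of the matter is the identification of this fibre weight with the function $\delta$ of \eqref{e:delta}. The tangent space $\ver_q$ to the orbit through $q$ is spanned by the fundamental vector fields $\zeta_{e_\alpha}(q)$, where $e_\alpha$ runs through a basis of $\gu_q^\bot$, and by definition of the inertia tensor the Gram matrix of these generators with respect to $\mu_q$ is exactly $(\I_q(e_\alpha,e_\beta))_{\alpha\beta}$. Hence the volume density induced on the orbit by $\mu$, measured against the reference volume transported from $G/H$, equals $|\det(\I_q(e_\alpha,e_\beta))|^{1/2}=\delta(q)$; here the determinant is understood on the nondegenerate part $\gu_q^\bot$, i.e.\ modulo the isotropy algebra $\gu_q$ on which $\I_q$ vanishes. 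In particular $\delta$ is the fibre volume density of $\pi$, and being $G$-invariant it descends to the function $\delta$ on $B$.

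With these two ingredients the computation is immediate. For $f\in\cinf(B)$ the pullback $\pi^*f$ has no vertical derivatives, so when $\Delta^Q(\pi^*f)=|\mu|^{-1/2}\partial_A(|\mu|^{1/2}\mu^{AB}\partial_B(\pi^*f))$ is written out in the adapted coordinates only the horizontal block survives, weighted by $\delta\sqrt{|\mu_0|}$. Thus
\begin{equation*}
 \Delta^Q(\pi^*f)
 = \pi^*\Big(\frac{1}{\delta}\,\textup{div}_B\big(\delta\,\nabla f\big)\Big)
 = \pi^*\Big(\Delta^B f + \vv<\nabla\log\delta,\nabla f>\Big),
\end{equation*}
the last equality by the product rule, where $\nabla$ and $\textup{div}_B$ are taken with respect to $\mu_0$. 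Comparing with $\pi^*\Delta^0 f=\Delta^Q\pi^*f$ and using injectivity of $\pi^*$ yields $\Delta^0=\Delta^B+\nabla\log\delta$, the vector field $\nabla\log\delta$ acting as the first order operator $f\mapsto\vv<\nabla\log\delta,\nabla f>$.

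The step requiring the most care — and the only genuine obstacle — is the factorisation of $dV_Q$ together with the identification of its fibre weight with $\delta$. Two points must be handled: first, the diffeomorphism $Q\cong B\times G/H$ is only local (globally there is the residual $W$-action of \eqref{e:non-can}), but since the asserted identity is pointwise this is harmless and one works on the principal stratum, where $\gu_q^\bot$ has constant dimension; second, one must check that transporting the reference fibre volume in the horizontal direction contributes nothing beyond the scaling by $\delta$, which is precisely the statement that $\delta$ is the Jacobian of $\eta_q$ along the vertical directions. Equivalently, one may argue intrinsically: the correction is $-\vv<\nabla(\pi^*f),H>$ with $H$ the mean curvature vector of the orbits, and the classical identity $H=-\nabla\log\delta$ relating mean curvature to the horizontal variation of the fibre volume density gives the same conclusion.
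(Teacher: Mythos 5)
Your proof is correct and takes essentially the same approach as the paper: the paper's own proof is a one-line appeal to the divergence-form coordinate expression $\Delta^Q = \sum g^{-\frac{1}{2}}\partial_i\sqrt{g}\,g^{ij}\partial_j$ in suitably chosen local coordinates (or alternatively to Helgason's radial-part formula \cite{H72}), and your argument is exactly that coordinate computation carried out in detail, with the volume factorization and the identification of the fibre weight with $\delta$ via the inertia tensor made explicit. Your closing intrinsic remark via the mean curvature of the orbits corresponds to the paper's alternative appeal to \cite{H72}.
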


\begin{proof}
This follows either from the coordinate expression $\Delta^Q = \sum
g^{-\frac{1}{2}}\del_i\sqrt{g}g^{ij}\del_j$ in suitably choosen
local coordinates, or by appealing to \cite{H72}.
\end{proof}

We view $\Gamma^B$ as the reduction of the probabilistic version of a
free particle motion on $(Q,\mu)$.

It is known that, if $\Gamma^B$ is
\emph{critical in the sense of Guerra and Morato} for
the stochastic action functional associated to the Lagrangian 
$
 L: TB\to \R
$,
\[
 L(q,v) = \by{1}{2}\mu_0(v,v) - V(q),
\]
then, with $\hbar = 1$, 
the stochastic Hamilton-Jacobi equation of 
Nelson~\cite[p.~72, Equ.~(14.17)]{N85}
\begin{equation}\label{e:SHJE}
 V - \by{1}{2}\mu_0(b,b) - \by{1}{2}\textup{div}\, b = 0
\end{equation}
holds. 
Note that by \eqref{e:Delta_0} and Definition~\ref{def:drift},
$b = \by{1}{2}\nabla(\log\delta)$ is the drift of the 
diffusion $\Gamma^B$ (with respect to the Levi-Civita connection
associated to $\mu_0$.). 
See also Guerra and Morato~\cite{GM83},
or  Zambrini~\cite[Equ.~(3.14)]{Z09} for a version in terms of
Yasue's least Action Principle.
The drift $b$ is a pure osmotic velocity in the nomenclature of
\cite{N85}. 

\begin{remark}
We have arrived at \eqref{e:SHJE} by asking the the following
question: Is there a Lagrangian $L=\by{1}{2}||\cdot||_{\mu_0}-V$ on
$TB$ such that the induced diffusion $\Gamma^B$ is critical for $L$ in
the sense of Guerra-Morato? By \cite[Thm.~14.1]{N85} this is
equivalent to solving $v=\nabla S$ where $v$ is the current velocity
and 
\[
 S(x,t) =
 -E\big[\int_t^{t_1}L_+(\Gamma^B_s,s)\,ds\big|\Gamma^B_t=x\big]
\]
and $L_+(x,s) = \by{1}{2}\mu_0(b,b)_x +
\by{1}{2}\textup{div}(b)_x-V(x,s)$. Given the diffusion (and its drift) this
amounts to finding a suitable potential function $V$, and the simplest
possible choice is expressed by \eqref{e:SHJE}. That is, $L_+=0$ and
$v=0$. Since the drift is the sum of osmotic and current velocity it
remains to verify that $b=\by{1}{2}\nabla(\log\delta)$ satisfies the
osmotic equation
\[
 \by{1}{2}\Delta\rho = \textup{div}(\rho b)
\]
and this certainly holds with $\rho=\delta$ and
$b=\by{1}{2}\nabla(\log\delta)$. 
We emphasize that this corresponds to the simplest possible choice for
a potential. There could be more interesting choices, arising
(probably) from a spin-debendent potential.     
\end{remark}

Thus we use \eqref{e:SHJE} as the defining equation for $V$ and find
\begin{equation}\label{e:V}
 V
 = \by{1}{4}(\Delta^B\log\delta 
   + \by{1}{2}\mu_0(\nabla\log\delta,\nabla\log\delta)
 = \by{1}{2}\delta^{-\frac{1}{2}}\Delta^B\delta^{\frac{1}{2}}
\end{equation}

With 
\[
 \psi = \delta^{\frac{1}{2}}
\]
and $b = \by{1}{2}\nabla(\log\delta)$
equation~\eqref{e:SHJE}
is equivalent to the stationary Schr\"odinger equation $H\psi = 0$
with the Hamiltonian operator
\begin{equation}\label{e:H}
 H = -\by{1}{2}\Delta^B + V;
\end{equation}
indeed
\begin{align*}
 \delta^{-\frac{1}{2}}H\delta^{\frac{1}{2}}
 &=
 \delta^{-\frac{1}{2}}\Big(-\by{1}{2}\Delta\delta^{\frac{1}{2}}+V\delta^{\frac{1}{2}}\Big)
 =
 \delta^{-\frac{1}{2}}\Big(-\by{1}{2}\textup{div}(\by{1}{2}\delta^{-\frac{1}{2}}\nabla\delta)+V\delta^{\frac{1}{2}}\Big)\\
 &=
 \by{1}{8}\delta^{-2}\mu_0(\nabla\delta,\nabla\delta)-\delta^{-1}\Delta\delta+V
 =
 \by{1}{2}\mu_0(b,b)-\by{1}{4}\delta^{-2}\mu_0(\nabla\delta,\nabla\delta)-\by{1}{4}\textup{div}(\nabla\log\delta)+V\\
 &=
 -\by{1}{2}\mu_0(b,b)-\by{1}{2}\textup{div}(b)+V.
\end{align*}

To compare these formulas with the case of quantum reduction of a free
particle under polar actions we quote a special case of a theorem of
\cite[Thm.~4.5]{FP08}.

\begin{theorem}[Feher and Pusztai~\cite{FP08}, spinfree version]\label{thm:FP}
The reduction of the (spinless) quantum system defined by the closure
of $-\by{1}{2}\Delta^Q$ on $C^{\infty}_{\textup{cp}}(Q)\subset
L^2(Q,d\mu)$ leads to the reduced Hamiltonian operator
\begin{equation}\label{e:FP}
 H_{\textup{QM}} = H
\end{equation}
with $H$ as in \eqref{e:H}.\footnote{The Planck constant $\hbar$ could
  be introduced by rescaling the diffusion operator on $Q$, i.e., the
  metric $\mu$. This rescaling does not affect the potential $V$ since
  factor cancels in this term.}
This operator is essentially self-adjoint on a suitable domain
(specified in \cite{FP08}).
\end{theorem}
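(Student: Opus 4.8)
The plan is to realise the reduction as a unitary equivalence between the restriction of $-\by{1}{2}\Delta^Q$ to the $G$-invariant part of $L^2(Q,d\mu)$ and the operator $H$ on $L^2(B,\mu_0)$, the passage being implemented by conjugation with $\delta^{\frac{1}{2}}$. First I would set up the identification $L^2(Q,d\mu)^G\cong L^2(B,d\mu_0)$. Since $\pi: Q\toto B$ is a Riemannian submersion whose fibres are the $G$-orbits, the coarea formula gives $\int_Q f\,d\mu=\int_B\big(\int_{\pi^{-1}(x)}f\big)\,d\mu_0$, and for $G$-invariant $f$ the inner integral is $\tilde f(x)\cdot\vol(\pi^{-1}(x))$. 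Because $G$ acts transitively and isometrically on each orbit with $\mu_q(\zeta_X(q),\zeta_Y(q))=\I_q(X,Y)$, and $\det\I$ is constant along an orbit, the orbit volume equals $C\,\delta(x)$ with $C=\vol(G/H)$ and $\delta$ the density of \eqref{e:delta}. Hence $f\mapsto\tilde f$ is an isometry $L^2(Q,d\mu)^G\to L^2(B,C\delta\,d\mu_0)$, and composing with multiplication by $(C\delta)^{-\frac{1}{2}}$ gives a unitary $U: L^2(B,d\mu_0)\to L^2(Q,d\mu)^G$, $Uh=(C\delta)^{-\frac{1}{2}}\pi^*h$. The reduction of the quantum system is by definition the restriction of the self-adjoint operator $\overline{-\by{1}{2}\Delta^Q}$ to this closed reducing invariant subspace, transported through $U$.

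The algebraic heart is then the identity $U^{-1}\big(-\by{1}{2}\Delta^Q\big)U=H$. By \eqref{e:Delta_0}, $\Delta^Q$ acts on $\cinf(Q)^G$ as $\Delta^0=\Delta^B+\nabla\log\delta$ (the second summand being the first-order operator $h\mapsto\mu_0(\nabla\log\delta,\nabla h)$), so after transport the reduced operator is $H_{\textup{QM}}=-\by{1}{2}\,\delta^{\frac{1}{2}}\Delta^0\delta^{-\frac{1}{2}}$, the constant $C$ cancelling. I would compute this by the Leibniz rule $\Delta^B(uv)=u\Delta^Bv+2\mu_0(\nabla u,\nabla v)+v\Delta^Bu$ with $u=\delta^{-\frac{1}{2}}$, using $\nabla\delta^{-\frac{1}{2}}=-\by{1}{2}\delta^{-\frac{1}{2}}\nabla\log\delta$. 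The point is that the two first-order contributions, one from $2\mu_0(\nabla\delta^{-\frac{1}{2}},\nabla h)$ in the Laplacian and one from the drift term acting on the product, cancel exactly, leaving
\[
 \delta^{\frac{1}{2}}\Delta^0\big(\delta^{-\frac{1}{2}}h\big)
 =\Delta^Bh+\big(\delta^{\frac{1}{2}}\Delta^B\delta^{-\frac{1}{2}}-\by{1}{2}\mu_0(\nabla\log\delta,\nabla\log\delta)\big)h .
\]
A short further application of the Leibniz rule to $\Delta^B\delta^{-\frac{1}{2}}$ identifies the multiplication operator in parentheses, after multiplication by $-\by{1}{2}$, with $V=\by{1}{2}\delta^{-\frac{1}{2}}\Delta^B\delta^{\frac{1}{2}}$ of \eqref{e:V}; thus $H_{\textup{QM}}=-\by{1}{2}\Delta^B+V=H$.

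For the self-adjointness statement, $G$ compact acts by isometries, hence by unitaries $U_g$ on $L^2(Q,d\mu)$ commuting with $\Delta^Q$, and the averaging projection $Pf=\int_G U_gf\,dg$ is the orthogonal projection onto $L^2(Q,d\mu)^G$; it commutes with the operator and carries $\cinf(Q)_0$ onto $\cinf(Q)_0^G$, the support staying compact since $G$ is compact. Thus $L^2(Q,d\mu)^G$ is reducing and, \emph{provided} $-\by{1}{2}\Delta^Q$ is essentially self-adjoint on $\cinf(Q)_0$, its restriction to the invariant subspace is self-adjoint with core $\cinf(Q)_0^G$, whose image under $U$ furnishes a core on which $H$ is essentially self-adjoint. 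The genuine difficulty is exactly this upstairs hypothesis: under the single-orbit-type assumption the singular orbits have been excised, so $Q$ and $B$ need not be complete (already $\SO(3)$ on $\R^3\setminus0$ gives $B=(0,\infty)$), and essential self-adjointness of $H$ instead hinges on the singular behaviour of the potential $V$ near $\partial B$, to be decided by Weyl's limit-point/limit-circle criterion. This boundary analysis, rather than the algebraic reduction above, is the main obstacle, and I would invoke the corresponding results of \cite{FP08} for the precise domain.
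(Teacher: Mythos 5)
The paper itself contains no proof of this statement---it is quoted as an external result of Feher--Pusztai \cite{FP08}---so the only possible comparison is with that cited source, and your reconstruction is essentially the argument used there: identify $L^2(Q,d\mu)^G$ with $L^2(B,C\delta\,d\mu_0)$ via the orbit-volume density, pass to $L^2(B,d\mu_0)$ by the unitary $h\mapsto (C\delta)^{-\frac{1}{2}}\pi^*h$, and conjugate the radial part $\Delta^0=\Delta^B+\nabla\log\delta$ of the paper's \eqref{e:Delta_0}. Your algebra checks out (the first-order terms cancel and the resulting multiplication operator is exactly $V=\by{1}{2}\delta^{-\frac{1}{2}}\Delta^B\delta^{\frac{1}{2}}$ of \eqref{e:V}), and your deferral of the essential self-adjointness and domain analysis to \cite{FP08} is consistent with the theorem statement, which does the same; flagging the incompleteness of $Q$ (singular strata removed under the single-orbit-type hypothesis) as the genuine analytic difficulty is an accurate assessment of where the remaining work lies.
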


We note that this theorem, as it is stated, is actually only the zeroth
order version of 
\cite[Thm.~4.5]{FP08} since
in its full version  it also 
incorporates a
spin dependent potential energy term. 
In the next section we give a different approach where such a term 
can also be included 
in the stochastic context.

\subsection{Brownian motion in a $G$-adapted frame and reduction}  
Assume the quotient $B=M/W=Q/G\cong\R^l$ is Euclidean and denote its $x$-independent orthonormal
basis by 
\begin{equation}\label{e:v}
 v_1,\dots,v_l. 
\end{equation}
Let 
\begin{equation}\label{e:Y}
 Y_1(x),\dots,Y_k(x)
\end{equation} 
denote an
$\I_x$-orthonormal frame on $\ho^{\bot}\subset\gu$
which depends smoothly on $x\in M$.
(Since $M=\bsc_{w\in W}B$ as argued in Section~\ref{sec:3A}  
we may non-canonically embed $B$ in $M$ as
an open subset and
thus we use the same variable  $x$ for elements in $B$ as well as in
$M$.) 
Concerning indices
we make the
convention that 
\[
 1\le A,B,C \le n=\dim Q,\quad
 1\le i,j,k \le l=\dim M,\quad
 1\le \alpha,\beta,\gamma \le k =\dim(G/H).
\] 
Then using \eqref{e:non-can}
$(L_A)_A$ is an orthonormal frame on
$Q\cong B\times G/H$ 
where we define 
\[
 L_A(q) 
 = L_A(g.x)
 = 
 \left\{
 \begin{matrix}
  g.v_i && \textup{if } A=i;\\
  g.\zeta_{Y_{\alpha}(x)}(x) && \textup{if }A=\alpha+l.
 \end{matrix} 
 \right\}
\] 

Now Brownian motion in $(Q,\mu)$  can be constructed via 
$\Gamma^Q = \tau\circ\Gamma^{T^*Q}$ as in 
\eqref{e:BM-Ham-flat}. 
Thus we need to calculate $H^0$: 
take local coordinates $(q^A)$ around a point $q=g.x$ which are adapted to the decomposition
$T_qQ\cong T_xM\oplus T_{[g]}G/H$ and express $L_A = \sum L_A^B\dd{q^b}{}$ whence
\begin{align*} 
 \sum\nabla^{\mu}_{L_A}L_A 
 &= 
 \sum L_A^B\nabla^{\mu}_{\frac{\del}{\del q^B}}(L_A^C\frac{\del}{\del q^C} )
 =
 0 + \sum L_A^BL_A^C\Gamma^D_{BC}\frac{\del}{\del q^D}
 =
 \sum \mu^{BC}\Gamma^D_{BC}\frac{\del}{\del q^D}\\
 &=
 - \frac{1}{\sqrt{|\mu|}}\frac{\del(\sqrt{|\mu|}\mu^{DE})}{\del q^D} \frac{\del}{\del q^D}
 = - \nabla\log\delta
\end{align*} 
where $\delta = \sqrt{|\mu|}$ was defined in \eqref{e:delta}.  
Therefore, $H^0(q,p) = \by{1}{2}\vv<p,\nabla\log\delta>$.

The Hamiltonian diffusion $\Gamma^{T^*Q}$ in $T^*Q$ is
defined by the Stratonovich equation
\begin{equation}
 \delta\Gamma^{T^*Q}
 =
 X_{H^0}(\Gamma^{\mathcal{H}})\delta t
 + \sum X_{H^A}(\Gamma^{\mathcal{H}})\delta W^A
\end{equation}
with notation as in \eqref{e:BM-Ham-flat}. 
The associated Stratonovich operator 
$T^*Q\times T(\R\times\R^n)\to T(T^*Q)$, 
$(q,p,t,t',w,w')\mapsto X_{H^0}(q,p)t' + \sum X_{H^A}(q,p)\vv<e^A,w'>$ 
is clearly
$G$-\emph{invariant} and reduces to an operator
$\mathcal{S}^{\mathcal{O}}$ on the reduced phase
space $T^*Q\sporb G$. In particular, if $\pi^{T^*Q}: T^*Q\toto(T^*Q)/G$
is the orbit space projection we have:

\begin{proposition}
$\Gamma^{T^*Q}$ drops to a
diffusion $\pi^{T^*Q}\circ\Gamma^{T^*Q}$ 
in $(T^*Q)/G$ which, for every coadjoint orbit
$\orb\subset\gu^*$,  preserves the smooth connected
symplectic strata in $T^*Q\sporb G\subset (T^*Q)/G$.
\end{proposition}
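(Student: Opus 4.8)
The plan is to reduce the statement to two ingredients that are already in place: the equivariant reduction Theorem~\ref{thm:e-r} and the Noether-type tangency argument of Lemma~\ref{lm:equiv-P}(2). First I would record, as observed in the paragraph preceding the statement, that the Stratonovich operator $\mathcal{S}^{T^*Q}$ assembled from $X_{H^0}$ and the $X_{H^A}$ is $G$-\emph{invariant}. Since a $G$-invariant operator is $G$-equivariant for the trivial representation $\rho(g)=\id$ on the source $\R\times\R^n$ (see the discussion following Theorem~\ref{thm:e-r}), Theorem~\ref{thm:e-r} applies directly and shows that $\pi^{T^*Q}\circ\Gamma^{T^*Q}$ is a diffusion in $(T^*Q)/G$, with induced generator $A_0$ characterized by $(\pi^{T^*Q})^*A_0 f = A\,(\pi^{T^*Q})^*f$. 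This settles the first assertion.

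The substantive point is that the reduced diffusion stays within the symplectic strata $T^*Q\sporb G = J_G^{-1}(\orb)/G$, and here I would argue upstairs exactly as in Lemma~\ref{lm:equiv-P}(2). The momentum functions are $G$-invariant: for $H^A(q,p)=\vv<p,L_A(q)>$ this is immediate because the adapted frame $(L_A)$ is $G$-invariant and $G$ acts by isometries, so $H^A(g.(q,p))=\vv<g.p,g.L_A(q)>=H^A(q,p)$; and $H^0(q,p)=\by{1}{2}\vv<p,\nabla\log\delta>$ is $G$-invariant because $\delta$ is a $G$-invariant function, whence $\nabla\log\delta$ is a $G$-invariant vector field. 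By Noether's theorem the Hamiltonian vector fields $X_{H^0}$ and $X_{H^A}$ of these invariant functions are therefore tangent to every level set $J_G^{-1}(\orb)$, and in the singular case moreover tangent to each smooth stratum of $J_G^{-1}(\orb)$; this is precisely what is cited from \cite{SL91,OR04}. Consequently $\mathcal{S}^{T^*Q}$ restricts to a Stratonovich operator on each such stratum, just as $\mathcal{S}^{T^*P}$ did in Lemma~\ref{lm:equiv-P}(2).

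Since all driving vector fields of the Stratonovich equation for $\Gamma^{T^*Q}$ are tangent to a fixed $G$-invariant smooth stratum $S\subset J_G^{-1}(\orb)$, a solution started with $\Gamma^{T^*Q}_0\in S$ a.s.\ remains in $S$ — this is the standard fact that Stratonovich flows preserve invariant submanifolds, already used to restrict $\mathcal{S}^{T^*P}$ in Lemma~\ref{lm:equiv-P}. Projecting by $\pi^{T^*Q}$ and using that $\pi^{T^*Q}(S)=S/G$ is a smooth connected symplectic stratum of $T^*Q\sporb G$, the reduced diffusion started on that stratum stays there, which is the claim.

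The main obstacle is a clean treatment of the singular case: one must verify that the tangency to the orbit-reduced strata is genuine and that preservation of a stratum transfers correctly from the upstairs diffusion on $J_G^{-1}(\orb)$ to the downstairs diffusion on $J_G^{-1}(\orb)/G$. Under the single-orbit-type hypothesis the orbit-type stratification of $Q$ is trivial, so the only stratification of $T^*Q\sporb G$ comes from the fiberwise reduced coadjoint orbit $\orb\spr{0}H$, and each downstairs stratum lifts to a single $G$-invariant stratum $S$ upstairs. Matching the driving vector fields with their restrictions to these strata and invoking the stratified symplectic reduction theory of \cite{SL91,OR04} is where the care is needed, though no genuinely new computation beyond Lemma~\ref{lm:equiv-P} is required.
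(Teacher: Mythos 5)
Your proof is correct, but it is organized differently from the paper's. The paper argues ``top-down'': since the Stratonovich operator $\mathcal{S}^{T^*Q}$ is $G$-\emph{invariant} (not merely equivariant), the invariant reduction theory of \cite{LO08a} applies and directly produces, on each smooth connected symplectic stratum $L\subset T^*Q\sporb G$, a reduced Stratonovich operator
$\mathcal{S}^L(\eta,t,t',w,w') = X_{h^0}(\eta)t' + \sum X_{h^A}(\eta)\vv<e_A,w'>$,
where $h^0$, $h^A$ are the functions on $L$ induced by the $G$-invariant momentum functions $H^0$, $H^A$; the projected process is then by construction the solution of this Hamiltonian Stratonovich equation on $L$, so stratum preservation is immediate, and in addition the reduced diffusion is exhibited as a \emph{Hamiltonian} diffusion on each stratum. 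That stronger output is precisely what the paper uses afterwards: the reduced equation \eqref{e:Ham-O} and the stochastic Hamilton--Jacobi analysis are formulated in terms of $h^0$, $h^i$, $h^{\alpha}$. Your proof is ``bottom-up'': descent of the diffusion via Theorem~\ref{thm:e-r} with the trivial representation, upstairs Noether tangency of $X_{H^0}$, $X_{H^A}$ to the strata of $J_G^{-1}(\orb)$ exactly as in Lemma~\ref{lm:equiv-P}(2), preservation of invariant submanifolds by solutions of Stratonovich equations, and then projection. Both routes rest on the same two pillars ($G$-invariance of the momentum functions and the stratified Noether theorem of \cite{SL91,OR04}), and your version is arguably more elementary since it never requires writing an equation on the quotient strata; what it does not deliver is the reduced operator and hence the downstairs Hamiltonian structure, which the paper gets for free and needs in the sequel. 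Your closing remark on the singular case is also consistent with the paper: under the single-orbit-type hypothesis the stratification of $T^*Q\sporb G$ comes only from the fiber $\orb\spr{0}H$, so the lifting of downstairs strata to $G$-invariant upstairs strata works as you describe.
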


\begin{proof}
On each smooth connected symplectic stratum $L\subset T^*Q\sporb G$ we can express
$\pi^{T^*Q}\circ\Gamma^{T^*Q}$ as the solution to the
Hamiltonian Stratonovich equation
$\delta(\pi^{T^*Q}\circ\Gamma^{T^*Q}) 
=
\mathcal{S}^L(\pi^{T^*Q}\circ\Gamma^{T^*Q},t,\delta
t,W, \delta W)$.
The reduced operator $\mathcal{S}^L$ on $L$ is given by 
\[
 \mathcal{S}^L:
 L\times T(\R\times\R^n)
 \longto TL,
 \quad
 (\eta,t,t',w,w')
 \longmapsto 
 X_{h^0}(\eta)t'
  + \sum X_{h^A}(\eta)\vv<e_A,w'>
\]
where $h^0$, $h^A$ are the induced functions on $L$
which pull-back to the restrictions of $H^0$, $H^A$ to
$(\pi^{T^*Q})^{-1}(L)$. See also \cite{LO08a}.
\end{proof}

Let us denote the restriction of $\pi^{T^*Q}\circ\Gamma^{T^*Q}$
to $T^*Q\sporb G$ by $\Gamma^{\mathcal{O}}$. According to the above
this diffusion further restricts to each smooth symplectic stratum
$L\subset T^*Q\sporb G$. 

To obtain formulas for $h^0$ and  $h^A$ we 
use the $A$-dependent $G$-equivariant isomorphism
\begin{align*}
 \Phi: {}\;
 \W := Q\times_BT^*B\oplus\bsc_{q\in Q}\ann\gu_q\longto T^*Q,\quad
 \big( (q;x,u),(q,\lam) \big)
 \longmapsto
 \big( q,(T_q^*\pi)(u) + A_q^*(\lam) \big).
\end{align*}
(This isomorphism depends on the choice of the connection and works
only if the action is of single orbit type. However, it does not
depend on whether or not the action is polar.)
Thus we find, denoting the function on $B$ induced from
\eqref{e:delta} again by $\delta$,
\begin{align*}
 \Phi^*H^0\big( (q;x,u),(q,\lam) \big)
 &= \vv<(T_q\pi)^*(u) + A_q^*(\lam), -\by{1}{2}\sum\nabla^{\mu}_{L_A}L_A>
 = \by{1}{2}\vv<u,\nabla\log\delta(x)>,\\
 \Phi^*H^i\big( (q;x,u),(q,\lam) \big)
 &=
 \vv<u,v_i>,\\
 \Phi^*H^{\alpha}\big( (q;x,u),(q,\lam) \big)
 &= \vv<A_q^*(\lam), g.\zeta_{Y_{\alpha}(x)}(x)>
 = \vv<g^{-1}.\lam, Y_{\alpha}(x)>
\end{align*}
where we used the equivariance property of $A$, and $x$, $q$ and $g$ are
related by $q=g.x$. The
basis elements $v_i$ and $Y_{\alpha}$ are introduced 
in \eqref{e:v} and \eqref{e:Y}.

Because we want to apply the stochastic Hamilton-Jacobi equation of
Lazaro-Cami and Ortega \cite{LO09} we want the reduced space $T^*Q\sporb G$
to be a cotangent bundle. Therefore, we assume now that
\[ 
 \orb\spr{0}H = \set{\textup{point}} = \set{[\lam]_H}
\]
whence $T^*Q\sporb G = T^*B\times\set{[\lam]_H} = T^*B$.
This is a very restrictive condidtion and cases where this assumption
holds are very important in the theory of Calogero-Moser systems. See
\cite{FP06a}
for a classification, further information and the so-called `KKS-mechanism'.
The reduced Hamiltonian functions thus become
\begin{align*}
 h^0(x,u) 
 &= \by{1}{2}\vv<u,\nabla\log\delta(x)>,\\
 h^i(x,u) 
 &=
 \vv<u,v_i>,\\
 h^{\alpha}(x,u) 
 &= \vv<\lam, Y_{\alpha}(x)>.
\end{align*}
Let $f\in\cinf(T^*B)$. The stochastic Hamilton-Jacobi equation of
\cite{LO09} associated to the reduced Hamiltonian diffusion
$\Gamma^{\mathcal{O}}$, given by  
\begin{equation}\label{e:Ham-O}
\delta\Gamma^{\mathcal{O}} = X_{h^0}(\Gamma^{\mathcal{O}})\delta t 
+ \sum X_{h^i}(\Gamma^{\mathcal{O}})\delta W^i 
+ \sum X_{h^{\alpha}}(\Gamma^{\mathcal{O}})\delta W^{\alpha},
\end{equation}
and the Lagrangian submanifold $L_f=\textup{graph}\,df\subset T^*B$ is
an equation of semi-martingales which reads
\begin{equation}\label{e:SHJE-LO}
 \widetilde{S}^x_t(\om)
 = f(x) 
  - \int_0^t h^0(x,d\widetilde{S}^x_s(\om))\;ds 
  - \int_0^t h^i(x,d\widetilde{S}^x_s(\om))\;\delta W^i_s
  - \int_0^t h^{\alpha}(x,d\widetilde{S}^x_s(\om))\;\delta W^{\alpha}_s,
 \qquad\textup{a.s.}
\end{equation}
Here $\widetilde{S}^x$ is, for all $x\in B$, a continuous $\R$-valued
semimartingale defined on some underlying probability space. The map
$B\to\R$, $x\mapsto \widetilde{S}^x_t(\om)$ is $C^1$ where it is well-defined
and accordingly $d\widetilde{S}^x_t(\om) = d(x\mapsto\widetilde{S}_t^x(\om))$. These
statements are proved in \cite{LO09} where $\widetilde{S}$ is called the
\emph{projected stochastic action} and is constructed in terms of the
defining data of the Hamiltonian diffusion and the Lagrange
submanifold $L_f$. 
In the following lines we parallel the arguments of
\cite[Example~1]{LO09} adapted to \eqref{e:SHJE-LO}.\footnote{There
  are two small differences: We use the convention that $[W^i,W^j] =
  \delta^{ij}t$, and there is a constant factor in \cite[Example~1]{LO09} which we could not
  verify whence some other choices differ as well.}
Thus we use the conversion rule \cite[p.~81]{Pro} that transforms Ito
to Stratonovich equations and find, a.s.,
\begin{align*}
 \widetilde{S}^x_t(\om)
 &= f(x) 
  - \int_0^t \by{1}{2}\vv<\nabla\widetilde{S}^x_s(\om),\nabla\log\delta> \;ds 
  - \sum\int_0^t \frac{\del\widetilde{S}^x_s(\om))}{\del x^i}\;\delta W^i_s
  - \sum\vv<\lam,Y_{\alpha}(x)>\int_0^t \;\delta W^{\alpha}_s\\
 &=
  f(x) 
  + \int_0^t\big( 
             \by{1}{2}\Delta\widetilde{S}^x_s(\om)
             - \by{1}{2}\vv<\nabla\widetilde{S}^x_s(\om),\nabla\log\delta>
             \big)\;ds 
  - \sum\int_0^t \frac{\del\widetilde{S}^x_s(\om))}{\del x^i}\;d W^i_s
  - \sum\vv<\lam,Y_{\alpha}(x)> W^{\alpha}_t.\\
\end{align*}
For the quadratic variation $[\widetilde{S}^x_t,\widetilde{S}^x_t]$ (see
\cite[Thm.~II.29]{Pro}) this implies
\[
 \by{1}{2}[\widetilde{S}^x_t,\widetilde{S}^x_t]
 = 
 \by{1}{2}\int_0^t\big(
  \vv<\nabla\widetilde{S}^x_s,\nabla\widetilde{S}^x_s> + \sum \vv<\lam,Y_{\alpha}(x)>^2
 \big) \;ds
 =
 \int_0^t\mathcal{H}^{\mathcal{O}}(x,\nabla\widetilde{S}^x_s) \;ds
\]
since 
$\sum \vv<\lam,Y_{\alpha}(x)>^2 =
\vv<\lam,\I_x^{-1}(\lam)>$ which equals twice the potential term
\eqref{e:H^0} in
the reduced deterministic Hamiltonian $\mathcal{H}^{\mathcal{O}}$. 
Applying the Ito formula \cite[Thm.~II.32]{Pro} to the semi-martingale
$\exp(-\widetilde{S}^x)$ thus yields
\begin{align*}
 e^{-\widetilde{S}^x_t} - e^{-f(x)}
 &=
   -\int_0^t e^{-\widetilde{S}^x_s}\;d\widetilde{S}^x_s
  + \by{1}{2}\int_0^t e^{-\widetilde{S}^x_s}\;d[\widetilde{S}^x_s,\widetilde{S}^x_s]\\
 &=
  \int_0^t e^{-\widetilde{S}^x_s}
    \big(
             - \by{1}{2}\Delta\widetilde{S}^x_s(\om)
             + \by{1}{2}\vv<\nabla\widetilde{S}^x_s(\om),\nabla\log\delta>
             + \by{1}{2}\vv<\nabla\widetilde{S}^x_s(\om),\nabla\widetilde{S}^x_s(\om)>
             + \by{1}{2}\vv<\lam,\I_x^{-1}(\lam)>
    \big)\;ds   \\
 &\phantom{===}
  + \sum\int_0^t e^{\widetilde{S}^x_s}\frac{\del\widetilde{S}^x_s(\om))}{\del x^i}\;d W^i_s
  + \sum\vv<\lam,Y_{\alpha}(x)> \int_0^t e^{\widetilde{S}^x_s}\; dW^{\alpha}_s.\\
\end{align*}
Observe that
$
 e^{-\widetilde{S}^x_s}(
   -\Delta\widetilde{S}^x_s(\om)  
   +
   \vv<\nabla\widetilde{S}^x_s(\om),\nabla\widetilde{S}^x_s(\om)>
   +
   \vv<\nabla\widetilde{S}^x_s(\om),\nabla\log\delta>)
 =
 \Delta e^{-\widetilde{S}^x_s} - \vv<\nabla e^{-\widetilde{S}^x_s(\om)},\nabla\log\delta>
$.
With 
\[
 \psi(t,x) := \delta^{-\frac{1}{2}}(x)E[\exp(-\widetilde{S}^x_t)],
\]
and assuming sufficient regularity conditions to interchange the order
of integration,
we thus
find the following diffusion equation: 

\begin{proposition}
\begin{equation}\label{e:H1}
 \frac{\del}{\del t}{}\psi
 = 
 \Big(\by{1}{2}\Delta 
 - \by{1}{2}\delta^{\frac{1}{2}}\Delta\delta^{-\frac{1}{2}} 
 + \by{1}{2}\vv<\lam,\I_x^{-1}(\lam)>\Big) \psi.
\end{equation}
\end{proposition}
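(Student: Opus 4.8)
The plan is to take the expectation of the It\^o expansion of $\exp(-\wt{S}^x_t)$ already displayed above, discard the stochastic integrals, and then conjugate the resulting linear parabolic equation by $\delta^{\frac{1}{2}}$.

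First I would invoke the pointwise identity recorded just before the statement to rewrite the $ds$-integrand in the It\^o expansion of $e^{-\wt{S}^x_t}$ as
\[
 \by{1}{2}\big(\Delta e^{-\wt{S}^x_s} - \vv<\nabla e^{-\wt{S}^x_s},\nabla\log\delta>\big) + \by{1}{2}\vv<\lam,\I_x^{-1}(\lam)>e^{-\wt{S}^x_s}.
\]
The two surviving terms in the expansion are stochastic integrals against $W^i$ and $W^{\alpha}$; under the standing integrability assumptions these are genuine martingales and hence drop out upon taking expectations. Writing $u(t,x) := E[e^{-\wt{S}^x_t}]$ and commuting $E$ past the spatial operators $\Delta$ and $\nabla$, I obtain the integral identity $u(t,x) - e^{-f(x)} = \int_0^t \big(\by{1}{2}\Delta u - \by{1}{2}\vv<\nabla u,\nabla\log\delta> + \by{1}{2}\vv<\lam,\I_x^{-1}(\lam)>u\big)\,ds$. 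Differentiating in $t$ then yields the linear diffusion equation $\frac{\del}{\del t}u = \by{1}{2}\Delta u - \by{1}{2}\vv<\nabla u,\nabla\log\delta> + \by{1}{2}\vv<\lam,\I_x^{-1}(\lam)>u$.

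The closing step is purely algebraic. I substitute $u = \delta^{\frac{1}{2}}\psi$, multiply through by $\delta^{-\frac{1}{2}}$, and verify the conjugation identity
\[
 \delta^{-\frac{1}{2}}\Big(\by{1}{2}\Delta(\delta^{\frac{1}{2}}\psi) - \by{1}{2}\vv<\nabla(\delta^{\frac{1}{2}}\psi),\nabla\log\delta>\Big) = \by{1}{2}\Delta\psi - \by{1}{2}\big(\delta^{\frac{1}{2}}\Delta\delta^{-\frac{1}{2}}\big)\psi,
\]
where $\delta^{\frac{1}{2}}\Delta\delta^{-\frac{1}{2}}$ is read as multiplication by the scalar $\delta^{\frac{1}{2}}(\Delta\delta^{-\frac{1}{2}})$. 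This follows by expanding $\Delta(\delta^{\frac{1}{2}}\psi)$ and $\vv<\nabla(\delta^{\frac{1}{2}}\psi),\nabla\log\delta>$ with the Leibniz rule and $\nabla\log\delta = \delta^{-1}\nabla\delta$: the cross terms proportional to $\vv<\nabla\log\delta,\nabla\psi>$ cancel, and the purely multiplicative remainder reorganizes into $-\by{1}{2}(\delta^{\frac{1}{2}}\Delta\delta^{-\frac{1}{2}})\psi$ after collecting the $\delta^{-1}\Delta\delta$ and $\delta^{-2}|\nabla\delta|^2$ coefficients. Since the potential term transforms trivially (it is multiplicative), applying this identity to the first two terms of the equation for $u$ delivers exactly \eqref{e:H1}.

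I expect the real obstacle to lie not in this algebra but in the two interchange steps: that the $dW$-integrals are true martingales with vanishing expectation, and that $E$ commutes with $\Delta$ and with $\del/\del t$. Both require moment bounds on $\wt{S}^x$ and its spatial derivatives that are uniform for $x$ in a neighbourhood and locally in $t$ --- precisely the \emph{sufficient regularity conditions} invoked in the text just above the statement. Making these rigorous would draw on the $C^1$-dependence of $x\mapsto\wt{S}^x_t$ established in \cite{LO09} together with dominated-convergence and differentiation-under-the-integral arguments; granting them, the computation above is forced and \eqref{e:H1} follows.
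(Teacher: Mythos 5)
Your proposal is correct and follows essentially the same route as the paper, whose ``proof'' is precisely the computation displayed before the statement: take expectations in the It\^o expansion of $e^{-\wt{S}^x_t}$, use the pointwise identity $e^{-\wt{S}}(-\Delta\wt{S}+\vv<\nabla\wt{S},\nabla\wt{S}>+\vv<\nabla\wt{S},\nabla\log\delta>)=\Delta e^{-\wt{S}}-\vv<\nabla e^{-\wt{S}},\nabla\log\delta>$, discard the martingale terms, interchange $E$ with the spatial operators under the stated regularity assumptions, and conjugate by $\delta^{\frac{1}{2}}$. The only difference is that you verify explicitly the conjugation identity $\delta^{-\frac{1}{2}}\big(\by{1}{2}\Delta(\delta^{\frac{1}{2}}\psi)-\by{1}{2}\vv<\nabla(\delta^{\frac{1}{2}}\psi),\nabla\log\delta>\big)=\by{1}{2}\Delta\psi-\by{1}{2}(\delta^{\frac{1}{2}}\Delta\delta^{-\frac{1}{2}})\psi$, which the paper leaves implicit in the phrase ``we thus find''; your algebra (cancellation of the cross terms and collection of the multiplicative remainder) is correct.
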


As with \eqref{e:H} and \eqref{e:V}  this should be compared with
\cite[Thm.~4.5]{FP08}.
One could repeat the comments made after \cite[Thm.~4.5]{FP08}: The
first term on the right hand side of \eqref{e:H1} corresponds to the
classical kinetic energy while the third corresponds to the classical
potential in \eqref{e:H^0}, and the second term (``an extra measure
factor'') has no trace in the classical picture.
However, there is a sign difference in the exponent of this measure
factor. We will come back to this in the next section and attribute
it to be due to a time reversal in the construction of the projected 
stochastic action.  
We want to emphasize that the potential term
$\by{1}{2}\vv<\lam,\I_x^{-1}(\lam)>$ has purely stochastic origins
namely it appears by invoking the Ito formula, and yet it coincides
exactly with the classical potential that is obtained via Hamiltonian
reduction. 

Nelson~\cite{N85} uses the stochastic
  Hamilton-Jacobi equation of Guerra-Morato~\cite{GM83}
coupled with the Fokker-Planck equation to
  obtain an equation of the
  form $i\dd{t}{}\psi = H\psi$. While the ingredients which go into
  \eqref{e:H1} are `very similar' we could not produce the
  factor $i$ in this setting.

\subsection{Eigenfunctions of $\Delta^0$ and the projected stochastic action}\label{sub:zero-orb}
There were several loose ends in the previous sections: We have
introduced the approaches of Nelson \cite{N85} and Lazaro-Cami and
Ortega~\cite{LO09} to the stochastic Hamilton-Jacobi equation but not
said anything about their relation. 
Concerning equation \eqref{e:H1} one may wonder how properties of the
Lagrange submanifolds $L_f$ will relate to properties of solutions
$\psi$, and if stationary solutions 
$(\by{1}{2}\Delta 
 - \by{1}{2}\delta^{-\frac{1}{2}}\Delta\delta^{\frac{1}{2}} 
 + \by{1}{2}\vv<\lam,\I_x^{-1}(\lam)>)\psi=\gamma\psi$, for
$\gamma\in\R$, can be characterized in terms of $L_f$. We can
neither 
 answer these questions in general, 
nor give a thourogh examination of the relations between the different
approaches to the stochastic Hamilton-Jacobi equation.
To obtain some partial answers we
assume that $\orb\subset\gu^*$
 is the $0$-orbit,
that is
\begin{equation}\label{e:ass-lam}
 \lambda = 0,
\end{equation}
from now on.

Since we can identify $B$ and $\R^l$ we may  determine the Hamiltonian diffusion
$\Gamma^{\mathcal{O}} = (X_t,U_t)\in T^*B=\R^{2l}$  given by
\eqref{e:Ham-O} by the Ito equation
\[
 d\left(\begin{matrix}
  X_t^x\\
  U_t^u
 \end{matrix}\right)
 =
 \left(\begin{matrix}
  \frac{1}{2}\nabla\log\delta(X_t^x)dt + dB\\
  0
 \end{matrix}\right),
 \qquad
 \Gamma^{\mathcal{O}}_0 = (X_0^x,U_0^u) = (x,u)
 \textup{ a.s.}
\]
where $B$ is Brownian motion in $\R^l$.

Let $\xi(x)$ be the explosion time of a solution $X^x$ starting at $x$
a.s.
In \cite[Chapter~V]{Pro} it is shown that the following holds: 
$U_t(\om) := \set{x\in B: \xi(x,\om)>t} \subset B$ is
an open subset and  
$\textup{fl}_t(\om): U_t(\om)\to B$, 
$x\mapsto X^x_t(\om)$  
is a diffeomorphism onto its image. Hence, by the inverse function theorem,
for given $x$, $\om$ and (small) $t$ there
exists a unique point $\hat{x}(x,t,\om)$ such that
$X^{\hat{x}(x,t,\om)}_t(\om) = x$. 
Let us assume that for each $x$ there is an $\om$-independent number 
\[
 T_x>0 \textup{ with } T_x < \xi(x) \textup{ a.s.}
\]
such that $\hat{x}(x,t,\om)$ exists for all $t\le T_x$. (If not one
can use a stopping time but then the time-reversed process below needs
also
to be defined in terms of a stopping time, and this does not seem
to be standard.)

With assumption \eqref{e:ass-lam} the (projected) stochastic action 
of
\cite[Def.~3.2]{LO09} 
associated to the smooth function $f$ and
Lagrange submanifold $L_f = \set{(x,\nabla f(x)):
  x\in B} \subset TB = T^*B$ 
becomes
\begin{equation}\label{e:pr-ac}
 \widetilde{S}_t^x(\om)
 =
 f(\hat{x}(x,t,\om)) 
 + \int_0^t\big(i_{X_{h^0}}\theta - h^{0}\big)((\Gamma^{\mathcal{O}})_s^{z}(\om))\;ds
 + \sum  \int_0^t\big(i_{X_{h^i}}\theta - h^{i}\big)((\Gamma^{\mathcal{O}})_s^{z}(\om))\;dB^i_s
\end{equation}
where $z = z(x,t,\om) = (\hat{x}(x,t,\om),\nabla
f(\hat{x}(x,t,\om)))$  and $\theta$ is the Liouville one-form.  
Notice that $h^0$ and $h^i$ are momentum functions whence 
$i_{X_{h^0}}\theta - h^{0} = i_{X_{h^i}}\theta - h^i = 0$. 

We shall denote the time reversal of $X^x$ on the interval $[0,T_x]$
by $\hat{X}^x$. This is defined as 
\[
 \hat{X}^x_t := X^{\hat{x}(x,T_x,.)}_{T_x - t}, \quad t\in[0,T_x].
\]
By \cite{HP86,Pro} $\hat{X}^x$ is again a diffusion
and satisfies the Ito equation
\[
 d\hat{X}_t = -\by{1}{2}\nabla\log\delta(\hat{X}^x_t)dt + dB_t.
\]
Note that $\hat{X}^x_0(\om)
= X^{\hat{x}(x,T_x,\om)}_{T_x}(\om) = x$,
$\hat{X}^x_{T_x}(\om) = \hat{x}(x,T_x,\om)$
and
$\hat{X}^x_{t}(\om) = \hat{x}(x,t,\om)$
for $t\in[0,T_x]$. 
Thus we rewrite \eqref{e:pr-ac} and use the Ito formula to obtain 
\begin{align}\label{e:pr-ac1}
 \widetilde{S}^x_t
 &= f(\hat{X}^x_t)
  = f(x) + 
    \int_0^t \by{1}{2}\big(\Delta f 
        - \vv<\nabla f,\nabla\log\delta>
        \big)(\hat{X}^x_s)\;ds
    + \int_0^t \frac{\del f}{\del x^i}(\hat{X}^x_s)\;dB^i_s. 
\end{align}
Thus $E[\widetilde{S}^x_t]$ satisfies the Kolmogorov backward equation
$\dd{t}{}E[\widetilde{S}^x_t] =
(\by{1}{2}\Delta-\by{1}{2}\nabla\log\delta)E[\widetilde{S}^x_t]$
and
$\psi(t,x) := \delta^{-\frac{1}{2}}(x)E[\widetilde{S}^x_t]$
satisfies   
\begin{equation}\label{e:H2}
 \dd{t}{}\psi = \big(\by{1}{2}\Delta -
 \delta^{-\frac{1}{2}}\Delta\delta^{\frac{1}{2}}\big)\psi
\end{equation}
which is, of course, \eqref{e:H1} with $\lam=0$.  

If $f$ is an eigenfunction of $\Delta - \nabla\log\delta$ with
$(\Delta - \nabla\log\delta)f = \gamma f$
then it
follows that 
\[
 \psi(t,x)
 =
 \delta^{-\frac{1}{2}}(x) E[\widetilde{S}^x_t] = e^{\frac{1}{2}\gamma t}\delta^{-\frac{1}{2}}(x)f(x)
\]
is a stationary solution of \eqref{e:H2}.

To change the sign in the exponent of the measure term in \eqref{e:H2}
we alter the definition of the projected stochastic action
\eqref{e:pr-ac}: With $L_f$ as above we define
\begin{align}\label{e:pr-ac2}
 S^+(x,t,\om) 
 &:=
 f(\tau\circ(\Gamma^{\mathcal{O}})^{(x,\nabla f(x))}_t(\om))\\
 &\phantom{==}\notag
 +
 \int_0^t\big(i_{X_{h^0}}\theta-h^0\big)((\Gamma^{\mathcal{O}})^{(x,\nabla
   f(x))}_s(\om)) \;ds 
 +
 \sum\int_0^t\big(i_{X_{h^i}}\theta-h^i\big)((\Gamma^{\mathcal{O}})^{(x,\nabla
   f(x))}_s(\om)) \;dB_s^i . 
\end{align} 
This would be the \emph{time-forward projected stochastic action}. 
With the same reasoning as above it follows that
\begin{align}\label{e:ES+}
 E[S^+(x,t)] 
 &= E[f\circ X^x_t]
 = f(x) + E\left[\int_0^t\big(\by{1}{2}\Delta f 
   + \by{1}{2}\vv<\nabla f,\nabla\log\delta>\big)(X^x_s)\;ds\right].
\end{align}

\begin{proposition}
The function
\[
 \psi^+(x,t) := \sqrt{\delta(x)} E[S^+(x,t)] 
\]
satisfies 
$\psi^+(x,0) = \sqrt{\delta(x)}f(x)$ and
\begin{equation}\label{e:H3}
 \dd{t}{}\psi^+
 =
 \by{1}{2}\big(\Delta -
 \delta^{-\frac{1}{2}}\Delta\delta^{\frac{1}{2}}\big)\psi^+.
\end{equation}
Furthermore, if $f$ is an eigenfunction of $\Delta^0 =
\Delta+\nabla\log\delta$ with eigenvalue $\gamma$ then $\psi^+$ is a
stationary solution of \eqref{e:H3}, i.e.
\[
 \psi^+(t,x)
 = e^{\frac{1}{2}\gamma t}\sqrt{\delta(x)}f(x).
\]
\end{proposition}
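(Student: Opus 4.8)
The plan is to reduce the forward action $S^+$ to a bare functional of the position diffusion, recognise its expectation as the solution of a backward Kolmogorov equation for the generator $\by{1}{2}\Delta^0$, and then gauge the weighted Laplacian $\Delta^0$ into the Schr\"odinger operator of \eqref{e:H3} by conjugation with $\delta^{\frac{1}{2}}$.

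First I would note that the two stochastic integral terms in \eqref{e:pr-ac2} vanish identically: since $h^0$ and $h^i$ are momentum functions, $i_{X_{h^0}}\theta - h^0 = i_{X_{h^i}}\theta - h^i = 0$, exactly as observed after \eqref{e:pr-ac}. Hence $S^+(x,t,\om) = f\big(\tau\circ(\Gamma^{\mathcal{O}})^{(x,\nabla f(x))}_t(\om)\big) = f(X^x_t(\om))$, where, under assumption \eqref{e:ass-lam}, the position process $X^x$ is the \emph{forward} diffusion $dX_t = \by{1}{2}\nabla\log\delta(X_t)\,dt + dB_t$ read off from the It\^o system for $\Gamma^{\mathcal{O}}$ in this subsection. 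The initial condition $\psi^+(x,0)=\sqrt{\delta(x)}f(x)$ is then immediate from $X^x_0=x$.

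Next, writing $u(x,t):=E[S^+(x,t)]=E[f(X^x_t)]$, the It\^o computation already recorded in \eqref{e:ES+}, together with the Kolmogorov backward equation (exactly as in the passage following \eqref{e:pr-ac1}, but now with the opposite drift sign), shows that $u$ solves $\dd{t}{}u = \by{1}{2}\Delta^0 u$ with $u(\cdot,0)=f$, where $\by{1}{2}\Delta^0$ is the generator of $X^x$ and $\Delta^0=\Delta+\nabla\log\delta$ is the operator of \eqref{e:Delta_0}. Setting $\psi^+=\delta^{\frac{1}{2}}u$, the core step is the conjugation identity
\[
 \delta^{\frac{1}{2}}\,\Delta^0\,\delta^{-\frac{1}{2}}
 = \Delta - \delta^{-\frac{1}{2}}\Delta\delta^{\frac{1}{2}},
\]
read as the Laplacian plus multiplication by the function $-\delta^{-\frac{1}{2}}\Delta\delta^{\frac{1}{2}}$, which converts $\dd{t}{}u=\by{1}{2}\Delta^0 u$ into \eqref{e:H3}. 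To verify the identity I would expand $\Delta^0(\delta^{-\frac{1}{2}}\psi)$ using the product rules for $\Delta$ and for $\nabla$; the first-order terms $\vv<\nabla\log\delta,\nabla\psi>$ cancel against the cross term from $\Delta(\delta^{-\frac{1}{2}}\psi)$, leaving the second-order part $\Delta\psi$ and a zeroth-order part that collapses, via $\nabla\delta^{\pm\frac{1}{2}}=\pm\by{1}{2}\delta^{\pm\frac{1}{2}}\nabla\log\delta$ and the pointwise identity $\delta^{\frac{1}{2}}\Delta\delta^{-\frac{1}{2}}-\by{1}{2}\vv<\nabla\log\delta,\nabla\log\delta>=-\delta^{-\frac{1}{2}}\Delta\delta^{\frac{1}{2}}$, to the stated potential.

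Finally, for the stationary solution, if $\Delta^0 f=\gamma f$ then $f$ is an eigenfunction of the generator $\by{1}{2}\Delta^0$ with eigenvalue $\by{1}{2}\gamma$, so $v(x,t)=e^{\frac{1}{2}\gamma t}f(x)$ solves the same backward equation with the same initial datum; by uniqueness $u=v$, whence $\psi^+(x,t)=\delta^{\frac{1}{2}}u=e^{\frac{1}{2}\gamma t}\sqrt{\delta(x)}f(x)$. That this is a genuine separated-variable solution of \eqref{e:H3} is re-checked directly from the conjugation identity, which gives $\big(\Delta-\delta^{-\frac{1}{2}}\Delta\delta^{\frac{1}{2}}\big)(\sqrt{\delta}f)=\gamma\sqrt{\delta}f$. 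The only real obstacle is the sign bookkeeping in the conjugation identity --- everything else is a direct appeal to \eqref{e:ES+} and to standard diffusion theory --- together with the (already standing) regularity assumptions needed to apply the It\^o formula and to interchange $E$ with differentiation in $t$.
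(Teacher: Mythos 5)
Your proof is correct and takes essentially the same route as the paper's: the paper's one-line proof (``follows directly from the Kolmogorov backward equation applied to \eqref{e:ES+}'') compresses exactly the steps you spell out, namely the vanishing of the $\big(i_{X_{h}}\theta-h\big)$ terms so that $S^+=f(X^x_t)$ with $X^x$ the forward diffusion of generator $\by{1}{2}\Delta^0$, the backward equation for $u=E[f(X^x_t)]$, and the gauge transformation $\psi^+=\delta^{\frac{1}{2}}u$ via the identity $\delta^{\frac{1}{2}}\Delta^0\delta^{-\frac{1}{2}}=\Delta-\delta^{-\frac{1}{2}}\Delta\delta^{\frac{1}{2}}$. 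Your explicit verification of that conjugation identity and the uniqueness argument giving $u=e^{\frac{1}{2}\gamma t}f$ are details the paper leaves implicit, and they check out.
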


\begin{proof}
Follows directly from the Kolmogorov backward equation applied to \eqref{e:ES+}.
\end{proof}

For $f=1$ and $\gamma=0$ we recover \eqref{e:H}.\footnote{Stationary
  solutions of \eqref{e:H3} correspond to stationary solutions of the
  corresponding Schr\"odinger equation $i\frac{\del}{\del t}\phi = -\by{1}{2}\big(\Delta -
 \delta^{-\frac{1}{2}}\Delta\delta^{\frac{1}{2}}\big)\phi$.}

\section{Reduction of Brownian
  motion and Quantum Calogero-Moser models}\label{sec:CMS}

\subsection{The Cartan decomposition}
Let $G$ be a semisimple Lie group with Lie algebra $\gu$ and Killing
form $B$. Consider a Cartan decomposition $\gu = \ko\oplus\po$ 
associated to the Cartan involution $\theta$,
and let
$G=K.\exp(\po)\cong K\times\po$, $g=k\exp x \leftmapsto(k,x)$ be the
corresponding decomposition of the group. Thus:
\[
[\ko,\ko]\subset\ko,\qquad
[\ko,\po]\subset\po,\qquad
[\po,\po]\subset\ko.
\]
Fix a maximal abelian subspace $\ao\subset\po$, and put
$\mo=Z_{\ko}(\ao)$ and $M=Z_K(\ao)$. 

Let $\Sigma$ be the set of restricted roots associated to the pair
$(\gu,\ao)$ and $\Sigma_+\subset \Sigma$ a choice of positive
roots. 
Then the associated root space decomposition is 
\[
 \gu
 =
 \gu_0\oplus\oplus_{\lam\in\Sigma}\gu_{\lam}
 \textup{ where }
 \gu_{0} = \mo\oplus\ao.
\]
We equip $\gu=\ko\oplus\po$ with the $\Ad$-invariant 
direct sum inner product
$\vv<.,.> = -B(.,\theta.) = -B|(\ko\times\ko) + B|(\po\times\po)$ 
and denote its restriction to $\ko$ or $\po$ by $\vv<.,.>$ again.

\subsection{The classical rational Calogero-Moser system associated to a
  Cartan decomposition}
Let $Q\subset\po$ denote the set of points which are regular with
respect to the $K$-action. Thus all isotropy subgroups $K_q$ are
conjugate to $M$ within $K$
and the quotient $Q/K=C$ is an open Weyl chamber.  
The Lagrangian $L$ is the kinetic energy
function 
\[ 
 L: TQ=Q\times\po\longto\R,
 \quad
 (q,v)\longmapsto\by{1}{2}\vv<v,v>=\by{1}{2}\mu(v,v)
\]
which is clearly $K$-invariant with respect to the tangent 
lifted $K$-action. 
Identifying $TQ=T^*Q$ via 
the metric we can equip $TQ$ with the standard symplectic
form $\Om$ and view $L$ as a Hamiltonian function $H$. 
Associated to the $K$-action there is now a \momap 
\[
 J: TQ\longto\ko^*=_{\vv<.,.>}\ko,
 \quad 
 (q,v)\longmapsto\ad(q).v=[q,v].
\]
Consider a (co-)adjoint orbit $\orb=\Ad(K).\lam\subset\ko$. Then the
Hamiltonian reduction of $(TQ,\Om,H)$ at the level $\orb$ yields a
reduced phase space of the form 
\begin{equation}\label{e:red-phase-space}
 TQ\sporb K 
 \cong
 TC\times\orb\spr{0}M
 \textup{ where }
 \orb\spr{0}M = (\orb\cap\mo^{\bot})/M.
\end{equation}
This isomorphism can be realized in terms of the mechanical
connection defined in \eqref{e:MC}.
The space $(\orb\cap\mo^{\bot})/M$ is, generally, not a manifold but
rather a stratified space.
Accordingly $TQ\sporb K$ is a stratified space, and each stratum
$TC\times(\textup{stratum})$ is equipped with a product symplectic
form such that the first factor is canonical while the second is
inherited from the KKS-form on $\orb$.
The reduced Hamiltonian becomes, in this picture, the Calogero-Moser
Hamiltonian function
\[
 H_{\textup{CM}}(x,v_0,[\lam])
 =
 \by{1}{2}\vv<v_0,v_0> 
 +
 \by{1}{2}\sum_{\alpha\in\Sigma_+}\by{\vv<\lam,\lam>}{\alpha(x)^2}
\] 
Note that the potential term
$\by{1}{2}\sum_{\alpha\in\Sigma_+}\by{\vv<\lam,\lam>}{\alpha(x)^2}
= \by{1}{2}\vv<A_x^*\lam,A_x^*\lam>
= \by{1}{2}\vv<\lam,\I_x^{-1}(\lam)>$ can be seen as an $M$-invariant function
on $C\times(\orb\cap\mo^{\bot})$.
See \cite{AKLM03,H04,FP06}.

\subsection{Stochastic reduction and Quantum CM systems}
As in Section~\ref{sub:zero-orb} we set $\orb=0$ in 
\eqref{e:red-phase-space} and hence $TQ\sporb K = TQ\spr{0}K = T(Q/K)
= TC \cong\R^{2l}$. 
Let $f: C\to\R$ be smooth and consider the forward projected
stochastic action 
\[
 S^+(x,t) = f\circ X^x_t
\]
from \eqref{e:pr-ac2} associated to the Lagrange submanifold $L_f =
\set{(x,\nabla f(x))}\subset TC$. 
As shown in Section~\ref{sub:zero-orb}, if $f$ is an eigenfunction of
$\Delta^0 = \Delta+\nabla\log\delta$ it follows that 
\[
 \psi^+(t,x) = \sqrt{\delta(x)}E[S^+(x,t)] 
 = e^{\frac{1}{2}\gamma t}\sqrt{\delta(x)}f(x)
\]
is a stationary solution to $\dd{t}{} - \by{1}{2}(\Delta -
\delta^{-\frac{1}{2}}\Delta\delta^{\frac{1}{2}}) = 0$. 
Such functions are provided by the zonal spherical functions. See
\cite[Appendix~C]{OP83}.
According to \cite[Theorem~5.1]{OP83} we have 
\[
 \by{1}{2}\delta^{-\frac{1}{2}}\Delta\delta^{\frac{1}{2}}
 =
 \sum_{\alpha\in\Sigma_+}\by{m_{\alpha}(2m_{\alpha}-2)|\alpha|^{2}}{8\alpha(x)^2}.
\]
Thus 
\[
 \psi(t,x) = \psi^+(it,x)
 = e^{i\frac{1}{2}\gamma t}\sqrt{\delta(x)}f(x)
\]
is a stationary solution to the Schr\"odinger equation
\begin{equation}\label{e:QCMS}
 i\dd{t}{}\psi
 =
 \left(
  -\by{1}{2}\Delta +
  \sum_{\alpha\in\Sigma_+}\by{m_{\alpha}(2m_{\alpha}-2)|\alpha|^{2}}{8\alpha(x)^2}
 \right)\psi
\end{equation}
where $m_{\alpha} = \dim\gu_{\alpha}$ is the multiplicity of the root. 
Equation~\eqref{e:QCMS} is the rational quantum Calogero-Moser
Schr\"odinger equation related to root systems of semi-simple Lie
algebras as described by  Olshanetsky and Perelomov~\cite{OP78,OP83}.

It will be a matter of future investigation to consider reduction at a
non-zero orbit $\orb$ and include a spin dependent potential in
\eqref{e:QCMS} as in \eqref{e:H1}.

\section{Appendix: Proof of Theorem~\ref{thm:e-r}}

Theorem~\ref{thm:e-r} is a result of \cite{HR10}. However, as this
paper is presently not published, we follow the referees' suggestion
and include a (slightly shortened)  proof for sake of completeness.

\begin{proof}[Proof of Theorem~\ref{thm:e-r}]
Let us begin by noting that $g\Gamma^{x,W}=\Gamma^{gx,\rho(g)W}$. Indeed,
\[
 \delta(g\Gamma^{x,W})
 = g\mathcal{S}(Y,\Gamma^{x,W})\delta Y
 = \mathcal{S}(\rho(g)Y,g\Gamma^{x,W})\delta(\rho(g)Y)
\]
whence $\tilde{\Gamma}:=g\Gamma^{x,W}$ satisfies
$\tilde{\Gamma}_0=gx$ a.s.\ and
$\delta\tilde{\Gamma}=\mathcal{S}(\rho(g)Y,\tilde{\Gamma})\delta(\rho(g)Y)$. By
existence and uniqueness of solutions the claim follows. 
In particular, we have 
$\pi\circ\Gamma^{x,W} = \pi\circ\Gamma^{gx,\rho(g)W}$. 

Claim:
\begin{equation}\label{e:claim1a}
 P_{gx} = g_*P_x 
\end{equation}
where $G$ acts on $W(Q)$ as $g: w\mapsto(t\mapsto gw(t))$. 
To see this let $S\subset W(Q)$ be a Borel cylinder set. This means
that there are $l\in\mathbb{N}$, $0\le t_1<\ldots<t_l\in
\mathbb{R}_+$, and a Borel set
$A\subset\Pi^l\dot{Q}$ such that 
$S = \ev(t_1,\dots,t_l)^{-1}(A)$,
where $\ev(t_1,\dots,t_l): W(Q)\to\Pi^l\dot{Q}$,
$w\mapsto(w(t_i))_{i=1}^l$. From the identity
$(\Gamma^{x,\rho(g)W})^{\check{}}_*P = (\Gamma^{x,W})^{\check{}}_*P$
we find
\begin{align*}
 P_{gx}(S)
 &= (\Gamma^{gx,\rho(g)W})^{\check{}}_*P(S)
  = P\set{\om: (\Gamma^{gx,\rho(g)W}_{t_i}(\om))_{i=1}^l \in A}\\
 &= P_x(\ev(t_1,\dots,t_l)^{-1}(g^{-1}A))
  = P_x(g^{-1}S).
\end{align*}

Consider the push forward map $\pi_*: W(Q)\to W(Q/G)$,
$w\mapsto\pi\circ w$. It is straightforward to see that
$\mathcal{B}(W(Q/G)) = \pi_*\mathcal{B}(W(Q))$. For $S_0 =
\pi_*(S)\in\mathcal{B}(W(Q/G))$ we may write the law 
$\left(P_{[x]}\right)_{[x]\in\dot{Q}/G}$ of $\pi\circ\Gamma$ as
\[
 P_{[x]}(S_0)
 = (\pi\circ\Gamma^{gx,\rho(g)W})^{\check{}}_*P(S_0)
 = P_{gx}(\pi_*^{-1}(S_0)).
\]
By \eqref{e:claim1a} this does not depend on $g\in G$.

Now, since $P_{[x]}$ is the push-forward of $P_x$ via $p$, we can use
the strong Markov property of $(P_x)_x$ to conclude that
$(P_{[x]})_{[x]\in\dot{Q}/G}$ satisfies the strong Markov property. 

To show that $\sum X_iX_i f\in\cinf(Q)^G$ for all $f\in\cinf(Q)^G$ consider
the standard basis $\{e_0,e_1,\dots,e_k\}$ of $\mathbb{R}\times\mathbb{R}^k$. For
$j=1,\dots,k$ we find
\[
 g\cdot X_j(x)
 =
 g\cdot \mathcal{S}(x,y,e_j)
 =
 \mathcal{S}(gx,\rho(g)y,\rho(g)e_j)
 =
 \sum_k g_{kj}X_k(gx),
\]
where $g_{kj} := \vv<e_k,\rho(g)e_j>$ is independent of 
$x\in Q$. Since $\sum_j g_{ij}g_{kj} = \delta_{ik}$,
\[
 X_i(gx) 
 = \sum_{j,k} g_{ij}g_{kj}X_k(gx)
 = \sum_j g_{ij}g\cdot X_j(x).
\] 
Thus $\Big(df(X_i)\Big)(gx)=\sum_j g_{ij}\Big(df(X_j)\Big)(x)$ for 
$f\in\cinf(Q)^G$ and also 
\[
 d\Big(df(X_i)\Big)(gx)\circ T_xg
 = d\Big(\sum_j g_{ij}df(X_j)\Big)(x)
 = \sum_j g_{ij}d\Big(df(X_j)\Big)(x).
\]
This implies that 
\begin{align*}
 \sum_i\Big(X_iX_if\Big)(gx)
 &= \sum_i\left\langle d \Big(df(X_i)\Big)(gx), X_i(gx)\right\rangle\\
 &= \sum_{i,j,k} 
    \left\langle 
      g_{ij}d\Big(df(X_j)\Big)(x)\circ (T_xg)^{-1} , g_{ik}(T_xg)\cdot X_k(x)
    \right\rangle
  = \sum_i\Big(X_iX_if\Big)(x).
\end{align*}
Similarly, it is also easy to see that $X_0$ is $G$-invariant. Thus the
generator $A = X_0+\by{1}{2}\sum X_iX_i$ acts on $\cinf(Q)^G$,
whence it induces a projected operator $A_0$ characterized by
$A\circ\pi^* = \pi^*\circ A_0$.

Finally, 
to see that $A_0$ is the generator of $\pi\circ\Gamma$
we need to show that, for all $t\in\mathbb{R}_+$, $[x]\in Q/G$, 
and $f\in\cinf(Q/G)_0$, the $\mathbb{R}$-valued process 
\begin{align*}
 & M_t^f: \W(Q/G)\longto\mathbb{R},\\
 & M_t^f(w) 
   := f(w(t))-f(w(0))-\int_0^t(A_0f)(w(s))\,ds
\end{align*}
is a $P_{[x]}$-martingale on $(W(Q/G),\mathcal{B}(W(Q/G))$ for the filtration
$(\mathcal{B}_t(W(Q/G)))_t$. 
See \cite[Def.~IV.5.3]{IW89}.
This means that 
for all $t\ge0$,  $s\in[0,t]$, and $A\in\mathcal{B}_s(W(Q/G))$
we should check that
\[
 \int_A E^{P_{[x]}}\Big[M_t^f \Big| \mathcal{B}_s(W(Q/G))\Big](w)\,P_{[x]}(dw)
 = \int_A M_s^f(w)\,P_{[x]}(dw);
\]
see \cite[Chapter~V]{Str}.
Indeed,
\begin{align*}
 \int_A E^{P_{[x]}}\Big[M_t^f \Big| \mathcal{B}_s(W(Q/G))\Big](w)\,P_{[x]}(dw)
 &=  
 \int_A M_t^f(w)\,P_{[x]}(dw)
  = 
 \int_{p^{-1}A}(p^*M_t^f)(u)\,P_x(du)\\
 &=
 \int_{p^{-1}A}(p^*M_s^f)(u)\,P_x(du)
  =
 \int_A M_s^f(w)\,P_{[x]}(dw).
\end{align*}
\end{proof}

\end{document}